\numberwithin{equation}{section}
\newtheorem{theorem}{Theorem}[section]
\newtheorem{proposition}[theorem]{Proposition}
\newtheorem{lemma}[theorem]{Lemma}
\newtheorem{remark}[theorem]{Remark}
\def\l{{\lambda}}
\def\s{{\sigma}}
\newcommand{\R}{\mathbb{R}}
\newcommand{\N}{\mathbb{N}}
\newcommand{\n}{\mathop{N}}
\newif\ifcomment \commentfalse
\def\commentON{\commenttrue}
\long\outer\def\BC#1\EC{\ifcomment \sloppy \par \# \ldots\dotfill
{\em #1} \dotfill \# \par \fi } \commentON
\newcommand{\remove}[1]{}
\begin{document}
\title[Nodal solutions..]{Nodal solutions for Lane-Emden problems in almost-annular domains}
\author[A.~L.~Amadori, F.~Gladiali, M.~Grossi]{Anna Lisa Amadori$^\dag$,  Francesca Gladiali$^\ddag$ and Massimo Grossi$^\sharp$}
\thanks{The authors are members of the Gruppo Nazionale per
l'Analisi Matematica, la Probabilit\'a e le loro Applicazioni (GNAMPA)
of the Istituto Nazionale di Alta Matematica (INdAM). The second author is supported by PRIN-2009-WRJ3W7 grant}
\date{\today}
\address{$\dag$ Dipartimento di Scienze Applicate, Universit\`a di Napoli ``Parthenope", Centro Direzionale di Napoli, Isola C4, 80143 Napoli, Italy. \texttt{annalisa.amadori@uniparthenope.it}}
\address{$\ddag$ Matematica e Fisica, Polcoming, Universit\`a di Sassari, via Piandanna 4, 07100 Sassari, Italy. \texttt{fgladiali@uniss.it}}
\address{$\sharp$ Dipartimento di Matematica, Universit\`a di Roma ``La Sapienza'',      \texttt{massimo.grossi@uniroma1.it}}
 
\begin{abstract}
In this paper we prove an existence result  to the problem
$$\left\{\begin{array}{ll}
-\Delta u = |u|^{p-1} u \qquad & \text{ in } \Omega , \\
u= 0 & \text{ on } \partial\Omega,
\end{array} \right.
$$
where $\Omega$ is a bounded domain in $\R^{\n}$ which is a perturbation of the annulus. Then there exists a sequence $p_1<p_2<..$ with $\lim\limits_{k\rightarrow+\infty}p_k=+\infty$ such that for any real number $p>1$ and $p\ne p_k$ there exist at least one solution with $m$ nodal zones.
\\
In doing so, we also investigate the radial nodal solution in an annulus: we provide an estimate of its Morse index and analyze the asymptotic behavior as $p\to 1$. 

{\bf Keywords:} semilinear elliptic equations, nodal solutions, supercritical problems.

{\bf AMS Subject Classifications:} 35J91, 35B05.

\end{abstract}
\maketitle

\section{Introduction}

We are interested in the existence of nodal solutions to the Lane-Emden problem 
\begin{equation} \label{eq1}
\left\{\begin{array}{ll}
-\Delta u = |u|^{p-1} u \qquad & \text{ in } \Omega , \\
u= 0 & \text{ on } \partial\Omega,
\end{array} \right.
\end{equation} 
where $p>1$ and $\Omega$ is a smooth bounded domain in $\R^{\n}$, $\n\ge 2$. 

Addressing this problem in full generality is hard, and 
the answer changes according to the features (geometrical or topological) of the domain $\Omega$ and on the exponent $p$ of the nonlinear term.
A wide literature is available on this subject, and many interesting results have been obtained. 
For example, if $1<p<\frac{\n+2}{\n-2}$ when $\n\geq 3$ and for any $p$ when $N=2$, the compactness of the embedding of $H^1_0(\Omega)$ into $L^{p+1}(\Omega)$
gives the existence of {\em infinitely many} solutions, to \eqref{eq1} in any  smooth domain $\Omega$. On the other hand, when the exponent $p$ becomes critical or supercritical, 
 i.e.  $p\ge\frac{\n+2}{\n-2}$ for $\n\geq 3$, the compactness of the previous embedding can fail 
and so does in general the existence of solutions. 
Indeed the classical Pohozaev identity \cite{Po} implies that in this case, if $\Omega$ is starshaped with respect to one of its point, then \eqref{eq1} does not admit solutions.
The existence can be restored when
the domain $\Omega$ exhibits an hole. 
The simplest example is the case of the annulus where a radial solution always exists even if the exponent $p$ is supercritical.
We quote also the papers \cite{BC} and  \cite{C} where the existence of a positive solution is proved in the critical case  in a general domain whit holes.
If $p>\frac{\n+2}{\n-2}$ the existence of positive solutions  have been established in \cite{dPW} in domains with a small circular hole, while \cite{DW07} examines the case of nodal solutions. Both these papers rely on a perturbation argument around the exterior domain. Finally, we want to quote the paper \cite{BCGP} where the existence of a positive solution in an expanding annular type domain is proved if the radius of the domain is large enough (see also the references therein for other existence results). 
Here we focus onto  domains $\Omega$ which are small perturbations of an annulus, namely
\begin{equation}\label{domain}
  \begin{array}{c} \Omega_t = \{ x+ t \s(x) \, : \, x\in A\}, \\
    \mbox{where $A=\{x\in\R^{\n}\, : \, a<|x|<b\}$ is an annulus} \\ \mbox{and $\s:\bar A \to \R^{\n}$ is a smooth function.}
\end{array}\end{equation}
This perturbation has been used in \cite{DD} to study the Gelfand problem on a deformed ball, and later also by \cite{cowan14} for the H\'enon problem.

Here we prove existence of positive or nodal solutions to \eqref{eq1} if the exponent of the nonlinear term is different from a sequence of values that accumulates at $+\infty$. Our result does not depend on the measure of the annulus $A$ that can be small or large and does not depend on the shape of the hole and produces a nodal solution whose profile is close to the radial nodal solution in the annulus.
Some existence results in domains which are very general perturbations of a fixed domain $\Omega$ have been obtained in \cite{D} using the Leray-Schauder degree in the subcritical case.
Finally let us observe that it is one of the very few results for nodal solutions in the supercritical range.\\
Our main result is the following:

\begin{theorem}\label{teo1}
Let $m$ be a positive integer and $p$ a real number greater than $1$. Then there exists a sequence of exponents $1<p_1<p_2<\cdots < p_k \nearrow +\infty$ such that
for $p\ne p_k$ there exists a classical solution of \eqref{eq1} with $m$ nodal regions in $\Omega=\Omega_t$ for $t$ small enough.
\end{theorem}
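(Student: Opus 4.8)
The plan is to produce the solution on $\Om_t$ by perturbing, through the implicit function theorem, a radial nodal solution of \eqref{eq1} on the fixed annulus $A$. First I would recall that the embedding of the radial subspace of $H^1_0(A)$ into $L^{p+1}(A)$ is compact for every $p>1$ --- essentially a one-dimensional Sobolev embedding, since $r$ ranges over the compact interval $[a,b]$ bounded away from the origin --- so standard variational or ODE--shooting arguments yield a radial solution $u_p$ of $-\D u=|u|^{p-1}u$ in $A$, $u=0$ on $\pa A$, with exactly $m$ nodal zones. As a solution of $-(r^{\n-1}u')'=r^{\n-1}|u|^{p-1}u$ on $(a,b)$, the function $u_p$ is smooth, and its interior zeros $a<r_1<\dots<r_{m-1}<b$ are simple (if $u_p(r_i)=u_p'(r_i)=0$, ODE uniqueness would force $u_p\equiv0$), so the nodal set of $u_p$ is a transversal union of $m-1$ spheres.

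Next I would freeze the domain. For $t$ small the map $F_t(x)=x+t\s(x)$ is a diffeomorphism of $\bar A$ onto $\overline{\Om_t}$, and pulling back through $v=u\circ F_t$ turns \eqref{eq1} on $\Om_t$ into
\begin{equation*}
 -\operatorname{div}\!\left(M_t(x)\,\na v\right)=J_t(x)\,|v|^{p-1}v \quad\text{in }A,\qquad v=0\ \text{on }\pa A,
\end{equation*}
where $J_t=|\det DF_t|$ and $M_t=J_t\,(DF_t)^{-1}(DF_t)^{-\top}$ depend smoothly on $t$ with $M_0=\mathrm{Id}$, $J_0=1$. To avoid the loss of compactness in the supercritical range I would work in H\"older spaces: set $X=\{v\in C^{2,\a}(\bar A):v=0\text{ on }\pa A\}$, $Y=C^{0,\a}(\bar A)$, and define $G:X\times\R\to Y$ by $G(v,t)=-\operatorname{div}(M_t\na v)-J_t|v|^{p-1}v$. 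Since $u_p$ is bounded and $s\mapsto|s|^{p-1}s$ is $C^1$, the map $G$ is $C^1$, $G(u_p,0)=0$, and $D_vG(u_p,0)=L_p:=-\D-p|u_p|^{p-1}$ acting $X\to Y$, which by Schauder theory is Fredholm of index zero. The implicit function theorem then produces, for every $p$ at which $L_p$ is injective on $X$, a curve $t\mapsto v_t\in X$ of solutions with $v_0=u_p$; transporting back, $u_t=v_t\circ F_t^{-1}$ is a classical solution on $\Om_t$.

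The heart of the matter, and the step I expect to be hardest, is to identify the set of exponents at which $L_p$ fails to be injective --- that is, at which $u_p$ is \emph{degenerate} --- and to show it is a sequence accumulating only at $+\infty$. Expanding $\phi\in\ker L_p$ in spherical harmonics $\phi=\sum_{j\ge0}\psi_j(r)Y_j(\theta)$, with $-\D_{S^{\n-1}}Y_j=\l_jY_j$ and $\l_j=j(j+\n-2)$, the kernel equation splits into the singular Sturm--Liouville problems
\begin{equation*}
 -\psi_j''-\tfrac{\n-1}{r}\psi_j'+\Big(\tfrac{\l_j}{r^2}-p|u_p|^{p-1}\Big)\psi_j=0,\qquad \psi_j(a)=\psi_j(b)=0 .
\end{equation*}
For $j$ large the potential satisfies $\l_j/r^2-p|u_p|^{p-1}\ge \l_j/b^2-p\|u_p\|_\infty^{p-1}>0$, so the mode-$j$ operator is coercive and contributes no kernel; hence for each fixed $p$ only finitely many modes can degenerate. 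For each fixed $j$ the relevant eigenvalue of the mode-$j$ operator depends analytically on $p$ and is not identically zero, so its vanishing set is discrete. Combining these facts, the degeneracy set has no finite accumulation point and may be enumerated as $p_1<p_2<\dots$. To see it is infinite and unbounded I would invoke the Morse-index information for $u_p$: as $p$ grows the Morse index of $u_p$ in $H^1_0(A)$ tends to $+\infty$, and since this index is constant on each interval free of degenerate exponents, infinitely many eigenvalue crossings must occur, forcing $p_k\nearrow+\infty$. (Any degeneracy coming from the radial mode $j=0$ is harmless, being simply absorbed into the $p_k$.)

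Finally, for $p\ne p_k$ the operator $L_p$ is injective, hence --- being Fredholm of index zero --- an isomorphism $X\to Y$, and the construction above yields $u_t$ on $\Om_t$ for $t$ small. It remains to control the nodal count. Since $v_t\to u_p$ in $C^{2,\a}(\bar A)$ as $t\to0$ and $F_t\to\mathrm{id}$ in $C^1$, the solution $u_t$ converges to $u_p$ in $C^1$; the simplicity (transversality) of the zeros $r_1,\dots,r_{m-1}$ of $u_p$ then persists under small $C^1$ perturbation, while $u_p$ stays bounded away from zero elsewhere, so $u_t$ develops no spurious zeros. Thus for $t$ small the nodal set of $u_t$ is a union of $m-1$ topological spheres separating $\Om_t$ into exactly $m$ nodal regions, which gives the required classical solution and completes the proof.
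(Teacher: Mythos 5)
Your overall architecture is exactly the paper's: pull the problem back to the fixed annulus via the diffeomorphism $x\mapsto x+t\s(x)$, set up the Implicit Function Theorem in $C^{2,\gamma}_0(\bar A)\to C^{0,\gamma}_0(\bar A)$, reduce invertibility of the linearization to nondegeneracy of the radial nodal solution, decompose the kernel along spherical harmonics, and show the degenerate exponents form a discrete set with no finite accumulation point; the preservation of the nodal count at the end is also handled the same way. The large-$j$ coercivity bound and the local-in-$p$ compactness argument are fine.

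The genuine gap is in what you yourself call the heart of the matter. First, the assertion that each mode-$j$ eigenvalue ``depends analytically on $p$ and is not identically zero'' is doing all the work and is not free. Analyticity of the eigenvalue branches presupposes that the branch $p\mapsto u_p$ of radial $m$-nodal solutions is itself (locally) analytic, which requires knowing that $u_p$ is \emph{radially} nondegenerate for every $p>1$. Your parenthetical remark that radial ($j=0$) degeneracies are ``harmless, being simply absorbed into the $p_k$'' is therefore circular: at such a point the branch $p\mapsto u_p$, and with it every eigenvalue function whose discreteness of zeros you need, could fail to be analytic. The paper closes this by proving radial nondegeneracy for \emph{all} $p$ (Proposition \ref{teo:annulus_nondegeneracy}), using the auxiliary functions $z=rv_p'+\frac{2}{p-1}v_p$ and $v_p'$ together with Sturm--Picone comparison to pin the radial Morse index at exactly $m$ with $\nu_l(p)\neq 0$. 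Second, ``not identically zero'' needs an argument: an analytic branch could in principle sit at the critical value on a whole interval. The paper rules this out via the asymptotics of the weighted radial eigenvalues $\nu_l(p)$ at both ends of the range --- $\nu_l(p)\to-\infty$ as $p\to\infty$ (an easy test-function computation, Lemma \ref{cpinfty}) and boundedness from below as $p\to1$ (Lemma \ref{cp1}), the latter resting on a nontrivial blow-up analysis showing $\|v_p\|_\infty^{p-1}$ stays bounded (Proposition \ref{p1}) --- which confines the solution set of each degeneracy equation to a compact subinterval on which the branch is analytic and nonconstant. Your appeal to the divergence of the Morse index is likewise asserted rather than proved (in the paper it is a consequence of $\nu_l(p)\to-\infty$), though for the theorem as stated you only need discreteness, not infiniteness, of the degeneracy set. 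In short: correct strategy, but the two key analytic inputs (radial nondegeneracy for all $p$, and the endpoint asymptotics forcing nonconstancy of the eigenvalue branches) are missing, and the first of these cannot be waved away.
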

In the case of a large annulus, i.e. $a=R$ and $B=R+1$ with $R$ large enough Theorem \ref{teo1} extends the existence result in \cite{BCGP} to a more general annular type expanding domain and to the case of nodal solutions. Let us stress that our proof looks like easier.\\
The previous result relies on a the use of the Implicit Function Theorem once we have studied the linearized problem associated to \eqref{eq1} when $\Omega$ is an annulus and $u$ is its  radial solutions (see Section \ref{s1} for the properties of this solution). Our main result in this direction, which in our opinion is interesting itself, is given by the following proposition,\par
\begin{proposition}[Characterization of degeneracy]\label{teo:annulus_nondegeneracy}
Let $\Omega$ be an annulus of $\R^{\n}$ with $\n\ge 2$ and $v_p$ a radial solution to \eqref{eq1} with $m$ nodal zones. Then $v_p$ is radially nondegenerate, and it is degenerate if  and only if 
\begin{align}\label{i3}
  \nu_l(p)=-j(\n-2+j), \quad && \mbox{ for some $l=1,\cdots m$ and $j\ge 2$}
  \intertext{or} \label{im}
  \nu_{m}(p)=-(\n-1) \qquad &&
\end{align}
where $\nu_l(p)$ is the $l^{th}$ eigenvalue of problem \eqref{eq:linauto}.
\end{proposition}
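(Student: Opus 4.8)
The plan is to diagonalise the linearised operator $L_p\phi := -\Delta\phi - p|v_p|^{p-1}\phi$ (with homogeneous Dirichlet conditions) by separation of variables, reducing its kernel to a family of singular radial problems indexed by the spherical harmonics. Writing a generic $\phi$ as $\phi(r,\theta)=\sum_{j\ge 0}\sum_k \psi_{j,k}(r)\,Y_{j,k}(\theta)$, where $Y_{j,k}$ are the spherical harmonics of degree $j$ on $S^{\n-1}$ (so that $-\Delta_{S^{\n-1}}Y_{j,k}=j(\n-2+j)Y_{j,k}$), a direct computation shows that $L_p\phi=0$ if and only if every nonzero component $\psi_{j,k}$ solves
\[
-\psi'' - \frac{\n-1}{r}\psi' + \frac{j(\n-2+j)}{r^2}\psi - p|v_p|^{p-1}\psi = 0, \qquad \psi(a)=\psi(b)=0 .
\]
Comparing with the weighted radial eigenvalue problem \eqref{eq:linauto}, this says precisely that $-j(\n-2+j)$ is one of its eigenvalues $\nu_l(p)$. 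Hence $v_p$ is degenerate if and only if $\nu_l(p)=-j(\n-2+j)$ for some $l\ge 1$ and some $j\ge 0$, and the whole task reduces to locating the admissible pairs $(l,j)$ using that the $l$-th eigenfunction of \eqref{eq:linauto} has exactly $l-1$ interior zeros.

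The mode $j=0$ gives the radial directions and the value $\nu_l(p)=0$. To rule this out (the radial nondegeneracy) I would use the scaling solution $z_p(r):=\frac{2}{p-1}v_p(r)+r\,v_p'(r)$, which solves the $j=0$ linearised equation but has $z_p(a)=a\,v_p'(a)\ne 0$ and $z_p(b)=b\,v_p'(b)\ne 0$; together with the properties of $v_p$ recorded in Section \ref{s1} this forces $0$ to lie strictly between two consecutive eigenvalues, so that $0\notin\{\nu_l(p)\}$.

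The decisive mode is $j=1$, where $j(\n-2+j)=\n-1$. Here I would exploit that $w:=v_p'$ solves the $j=1$ radial equation exactly (differentiate the radial equation for $v_p$ in $r$), while $w(a),w(b)\ne 0$. Since in each of the $m$ nodal zones $v_p$ has a single critical point, $w$ has exactly $m$ interior zeros in $(a,b)$. Comparing $w$ with the solution $u$ of the same ODE that vanishes at $r=a$ (the eigenfunction candidate) through the Sturm interlacing of zeros of two independent solutions, one finds that $u$ has exactly $m-1$ interior zeros whenever $u(b)=0$; equivalently $-(\n-1)$ can be an eigenvalue of \eqref{eq:linauto} only as the $m$-th one, which gives \eqref{im}. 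The same interlacing, together with the boundary data $w(a),w(b)\ne0$, also locates $-(\n-1)$ between $\nu_{m-1}(p)$ and $\nu_{m+1}(p)$, so that $\nu_l(p)<-(\n-1)$ forces $l\le m$.

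Finally, for $j\ge 2$ one has $j(\n-2+j)>\n-1$, so the required values $-j(\n-2+j)$ all lie strictly below $-(\n-1)$. Combining the monotonicity of $l\mapsto\nu_l(p)$ with the bound $\nu_l(p)<-(\n-1)\Rightarrow l\le m$ from the previous step, an equality $\nu_l(p)=-j(\n-2+j)$ can occur only for $l\le m$, which is exactly \eqref{i3}. I expect the main obstacle to be the oscillation bookkeeping in the $j=1$ step — making rigorous the zero count of $v_p'$ and its interlacing with the Dirichlet solution, including the sign of the boundary values at $r=a,b$ — and, on the same footing, the precise exclusion of $0$ from the spectrum needed for the radial nondegeneracy.
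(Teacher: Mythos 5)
Your plan retraces the paper's proof almost step for step: the spherical--harmonic reduction to \eqref{eq:linauto}, the auxiliary function $z_p=rv_p'+\tfrac{2}{p-1}v_p$ for the radial mode, and $v_p'$ for the mode $j=1$, with Sturm-type zero counting throughout. Your use of Sturm separation (interlacing of two solutions of the \emph{same} equation) where the paper uses Sturm--Picone comparison between different equations is a harmless variant, and the exact count of $m$ interior zeros of $v_p'$ that you invoke is correct, since at any interior critical point $v_p''=-|v_p|^{p-1}v_p\neq 0$, so each nodal annulus contains exactly one critical point.

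The one step where your stated justification does not close is the radial nondegeneracy. The existence of a solution of the $\nu=0$ equation with $z_p(a),z_p(b)\neq 0$ cannot by itself exclude $0$ from the spectrum: for \emph{every} value of $\nu$, including the eigenvalues, the equation admits solutions not vanishing at the endpoints (solve the initial value problem with $\phi(a)=1$). What actually does the work is the exact count of the interior zeros of $z_p$: it has at least one zero in each nodal interval $(r_{i-1},r_i)$ because $z_p(r_{i-1})=r_{i-1}v_p'(r_{i-1})$ and $z_p(r_i)=r_iv_p'(r_i)$ have opposite signs, and at most one because otherwise $z_p$ would be a sign-changing Dirichlet eigenfunction of the linearized operator on a subinterval of $(r_{i-1},r_i)$, contradicting the radial Morse index one of the one-signed piece $u_i$ recorded in Remark \ref{solposincoll}. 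Hence $z_p$ has exactly $m$ interior zeros. Pairing this with the fact that \eqref{eq:linauto} has at least $m$ negative eigenvalues (test the quotient \eqref{numinmax} on the span of the disjointly supported $u_i$, which gives a strictly negative value) and a Sturm--Picone comparison of $z_p$ against the $(m+1)^{th}$ eigenfunction, which has $m$ interior zeros, yields $\nu_m<0<\nu_{m+1}$; only then is $0$ excluded from the spectrum. Once this is in place, your $j=1$ and $j\ge 2$ steps go through as described and reproduce \eqref{im} and \eqref{i3}.
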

In some sense the previous proposition characterizes the ``bad values'' of $p$ where the Implicit Function Theorem does not hold and generalizes the study of the degeneracy of the radial positive solution in the paper \cite{GGPS} to the case of nodal solutions. Since it is not possible to solve the equations \eqref{i3}, \eqref{im} explicitly, we will derive that they have a countable number of solutions by studying them for $p$ close to $1$ and $+\infty$ and applying some ideas of  \cite{DW07}.
This allows to prove the following:
\begin{proposition}[Degeneracy points]\label{teo:annulus_degeneracy}
Let $\Omega$ be an annulus in $\R^{\n}$ with $\n\ge 2$ and $v_p$ a radial solution to \eqref{eq1} with $m$ nodal zones. Then there exists a sequence $1<p_1<p_2<\cdots < p_k \nearrow +\infty$ such that $v_{p}$ is  degenerate if and only if $p=p_k$. Moreover the Morse index of $v_p$ goes to $+\infty$ as $p\to \infty$.
\end{proposition}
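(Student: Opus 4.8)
The plan is to combine the degeneracy characterization from Proposition~\ref{teo:annulus_nondegeneracy} with an asymptotic analysis of the eigenvalues $\nu_l(p)$ in the two limits $p\to 1^+$ and $p\to+\infty$. The strategy rests on viewing the degeneracy conditions \eqref{i3} and \eqref{im} as a countable family of equations $\nu_l(p)=-j(\n-2+j)$ (for $l=1,\dots,m$ and $j\ge 2$) together with $\nu_m(p)=-(\n-1)$, and showing that each such equation is satisfied for at most finitely many $p$, while infinitely many of them are satisfied somewhere. First I would establish the relevant monotonicity and continuity properties of $p\mapsto \nu_l(p)$: the eigenvalues depend continuously on $p$, and I expect each $\nu_l(p)$ to be monotone (or at least eventually monotone) in $p$, so that a given level $-j(\n-2+j)$ is crossed at most once, or finitely many times, by each eigenvalue branch.

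Next I would pin down the boundary behavior of these branches. As $p\to1^+$, the problem \eqref{eq1} linearizes and the radial solution $v_p$ converges (after suitable rescaling) to an eigenfunction of the Laplacian on the annulus; this should allow an explicit computation of $\lim_{p\to1^+}\nu_l(p)$, which I anticipate lies above all the target levels $-j(\n-2+j)$ with $j\ge2$ and above $-(\n-1)$, so that no degeneracy occurs for $p$ near $1$. The more delicate end is $p\to+\infty$. Here I would invoke the asymptotic analysis announced in the abstract and alluded to via the methods of \cite{DW07}: as $p\to\infty$ the eigenvalues $\nu_l(p)$ should decrease without bound (this is exactly what forces the Morse index to diverge), so each branch eventually drops below every fixed negative level and hence crosses each of the countably many target values. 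Combining the two limits with continuity and the intermediate value theorem yields, for each admissible pair $(l,j)$ and for the condition \eqref{im}, a nonempty but finite set of degeneracy exponents.

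I would then assemble the full degeneracy set as the union over all $l\in\{1,\dots,m\}$ and $j\ge2$ of the solutions of $\nu_l(p)=-j(\n-2+j)$, together with the solutions of $\nu_m(p)=-(\n-1)$. Each individual equation contributes finitely many points, and the union is a countable set with no finite accumulation point (because near any fixed $p$ only finitely many levels $-j(\n-2+j)$ lie in the bounded range swept by the finitely many branches $\nu_1,\dots,\nu_m$). Relabeling this discrete set in increasing order produces the claimed sequence $1<p_1<p_2<\cdots$, and the divergence $p_k\nearrow+\infty$ follows because for every fixed $K$ there are only finitely many degeneracy points below $K$. By Proposition~\ref{teo:annulus_nondegeneracy}, $v_p$ is degenerate precisely at these exponents, which gives the first assertion.

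Finally, for the Morse index statement I would express the Morse index of $v_p$ as a sum over the radial part and the angular contributions: each eigenvalue $\nu_l(p)$ that has dropped below $-j(\n-2+j)$ contributes the multiplicity of the $j$-th spherical harmonic (namely the dimension of the corresponding eigenspace on the sphere $S^{\n-1}$) to the Morse index. Since the $p\to\infty$ asymptotics force $\nu_l(p)\to-\infty$, for any fixed $j$ the inequality $\nu_l(p)<-j(\n-2+j)$ holds for all large $p$, so more and more spherical-harmonic modes become unstable; summing their multiplicities shows the Morse index diverges. The main obstacle I anticipate is making the $p\to+\infty$ asymptotic analysis of $\nu_l(p)$ rigorous and quantitative enough to guarantee unbounded decrease of \emph{every} branch — controlling the limiting profile of the rescaled radial solution and the associated singular eigenvalue problem is the technical heart of the argument, and it is presumably where the ideas borrowed from \cite{DW07} do the real work.
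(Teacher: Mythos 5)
Your overall strategy is the paper's: use the characterization of Proposition~\ref{teo:annulus_nondegeneracy}, pin down the behavior of the branches $\nu_l(p)$ at both ends of the range, get existence of infinitely many crossings by the intermediate value theorem, and get discreteness so the set can be enumerated as $p_1<p_2<\cdots\nearrow+\infty$. The Morse index part also matches the paper (formula \eqref{morseformula} with $J_l(p)\to\infty$). However, there is one genuine gap: your mechanism for showing that each equation $\nu_l(p)=-j(\n-2+j)$ has only finitely many (or isolated) solutions is an ``expected'' monotonicity of $p\mapsto\nu_l(p)$, which you never prove and which is not available. Continuity alone does not prevent a branch from sitting at a level $-j(\n-2+j)$ on a whole interval. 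The paper closes this step differently: it shows (Lemma~\ref{isolated}, arguing as in \cite[Lemma 3.3(c)]{DW07}) that $p\mapsto\nu_l(p)$ is \emph{locally analytic}; since each branch is non-constant (it tends to $-\infty$) and, by the boundary estimates, all solutions of a given equation lie in a compact subset of $(1,+\infty)$, each level set is finite. You need to replace the monotonicity guess by this analyticity argument (or some equivalent) for the proof to go through.

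Two smaller inaccuracies, neither fatal. First, you place the real technical work of \cite{DW07} in the $p\to+\infty$ asymptotics of $\nu_l(p)$; in fact that limit is elementary — testing the Rayleigh quotient \eqref{numinmax} on the span of the bumps $u_i$ of Remark~\ref{solposincoll} gives $\nu_m(p)\le(1-p)a^2\lambda_1\to-\infty$ — and the \cite{DW07} machinery is used only for the analyticity of the branches. Second, your claim that as $p\to1^+$ all branches converge to limits above every target level is false for $l<m$: the proof of Proposition~\ref{teo:annulus_nondegeneracy} shows $\nu_1(p)<\cdots<\nu_{m-1}(p)<-(\n-1)$ for \emph{all} $p$, so those branches may well sit below some levels $-j(\n-2+j)$ near $p=1$. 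What the argument actually requires (and what the paper proves in Lemma~\ref{cp1}) is only that all $\nu_l(p)$ are bounded from below as $p\to1^+$ (to confine the solution set of each equation to a compact interval) together with $\nu_m(p)\to0$ (to run the intermediate value theorem on the $m$-th branch and produce infinitely many degeneracy points).
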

Propositions \ref{teo:annulus_nondegeneracy} and \ref{teo:annulus_degeneracy} extend some properties of the radial solution $v_p$ studied in \cite{PS} to the case of nodal solutions with $m\geq 2$. The characterization of degeneracy in Proposition \ref{teo:annulus_nondegeneracy} is the key ingredient in \cite{GGPS} to prove the bifurcation of nonradial solutions from the positive radial solution in the annulus. Unfortunately in the case of nodal solutions some technical problems do not allow to conclude. We believe anyway that this problem deserves further study.
\par
Another interesting byproduct of Proposition \ref{teo:annulus_nondegeneracy} is an estimate from below for the Morse index.
\begin{proposition}[Morse index]\label{teo:morseindex}
  Let $\Omega$ be an annulus in $\R^{\n}$ with $\n\ge 2$, $p>1$ and $v_p$ a radial solution to \eqref{eq1} with $m$ nodal zones. Then its Morse index is strictly greater than $(m-1)(\n+1)$.
\end{proposition}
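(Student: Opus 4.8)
The plan is to decompose the linearized operator at $v_p$ into spherical harmonic sectors and to bound from below only the contributions of the first two sectors. Writing the second variation as $Q(\phi)=\int_\Om\left(|\na\phi|^2-p|v_p|^{p-1}\phi^2\right)$ and expanding $\phi=\sum_j \psi_j(r)\,Y_j$ in spherical harmonics, the Morse index equals $\sum_{j\ge0} d_j\,\iota_j$, where $d_j$ is the multiplicity of the degree-$j$ harmonics (so $d_0=1$, $d_1=\n$) and $\iota_j$ is the number of negative Dirichlet eigenvalues of the radial operator
\[
L_j\psi=-\psi''-\frac{\n-1}{r}\psi'+\frac{j(\n-2+j)}{r^2}\psi-p|v_p|^{p-1}\psi .
\]
By the variational (min--max) characterization of the eigenvalues $\nu_l(p)$ of \eqref{eq:linauto}, taken with respect to the weight $1/r^2$, one has $\iota_j=\#\{l:\nu_l(p)<-j(\n-2+j)\}$. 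Since all sectors contribute nonnegatively, it will suffice to prove $\iota_0\ge m$ and $\iota_1\ge m-1$; indeed this gives a Morse index of at least $d_0\,m+d_1\,(m-1)=m+\n(m-1)=(m-1)(\n+1)+1>(m-1)(\n+1)$.

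For the radial sector $j=0$ I would argue by exhibiting an $m$-dimensional space on which $Q$ is negative. Let $a=r_0<r_1<\cdots<r_m=b$ be such that the nodal zones of $v_p$ are the annuli $\{r_{i-1}<|x|<r_i\}$, and set $w_i=\chi_i v_p$, where $\chi_i$ is the characteristic function of the $i$-th zone. Each $w_i$ lies in $H^1_0(\Om)$ (it vanishes on $\pa\Om$ and at the interior nodal spheres, where $v_p=0$), the $w_i$ have disjoint supports, and testing the equation on each zone gives $\int_{\{r_{i-1}<|x|<r_i\}}|\na v_p|^2=\int_{\{r_{i-1}<|x|<r_i\}}|v_p|^{p+1}$. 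Hence $Q(w_i)=(1-p)\int_{\{r_{i-1}<|x|<r_i\}}|v_p|^{p+1}<0$ because $p>1$, and disjointness of supports makes $Q$ negative definite on $\mathrm{span}\{w_1,\dots,w_m\}$. This forces $\iota_0\ge m$, that is $\nu_m(p)<0$.

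The heart of the matter is the bound $\iota_1\ge m-1$ in the sector $j=1$, and this I would obtain by an oscillation argument. Differentiating the radial equation satisfied by $v_p$ shows that $v_p'$ solves $L_1\psi=0$ on $(a,b)$, while by Hopf's lemma $v_p'(a)\ne0$, so $v_p'$ does not satisfy the Dirichlet condition. Since $v_p$ vanishes at the $m+1$ points $r_0,\dots,r_m$ and is nonzero in between, Rolle's theorem produces at least one zero of $v_p'$ in each open zone, hence at least $m$ zeros of $v_p'$ in $(a,b)$. Let $\psi_0$ be the solution of $L_1\psi=0$ with $\psi_0(a)=0$; it is linearly independent of $v_p'$, so by the Sturm separation theorem its zeros interlace those of $v_p'$, giving $\psi_0$ at least $m-1$ zeros inside $(a,b)$. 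Finally, the Sturm oscillation theorem for the Sturm--Liouville operator $L_1$ (regular on $[a,b]$ since $a>0$) identifies the number of negative Dirichlet eigenvalues with the number of interior zeros of $\psi_0$, whence $\iota_1\ge m-1$, that is $\nu_{m-1}(p)<-(\n-1)$. Combining with the previous step yields the claim. The main obstacle I anticipate is the careful justification of this last sector: checking that Sturm's separation and oscillation theorems apply to $L_1$ with the weight $r^{\n-1}$ and the singular coefficient $j(\n-2+j)/r^2$, and confirming that the count records strictly negative eigenvalues (which is guaranteed away from the degeneracies identified in Proposition \ref{teo:annulus_nondegeneracy}).
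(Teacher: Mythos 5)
Your proposal is correct and follows essentially the same route as the paper: the Morse index is decomposed over spherical harmonic sectors, the $m$ disjointly supported restrictions of $v_p$ to its nodal zones give $m$ negative eigenvalues in the radial sector, and the $m$ interior zeros of $v_p'$ (a solution of the degree-one linearized equation, nonvanishing at $r=a$ by Hopf) force at least $m-1$ negative eigenvalues in the $j=1$ sector of multiplicity $\n$, yielding the count $m+\n(m-1)=(m-1)(\n+1)+1$. The only cosmetic difference is that the paper runs the $j=1$ step through a Sturm--Picone comparison between $v_p'$ and the $l$-th eigenfunction of \eqref{eq:linauto} (concluding $\nu_l(p)<-(\n-1)$ for $l\le m-1$), whereas you use Sturm separation plus the oscillation count for the shooting solution $\psi_0$; the two arguments are equivalent.
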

Such estimate improves the ones obtained in \cite[Theorem 1.1]{AP04} and \cite[Theorem 2.2]{BDG06} in the particular case of power nonlinearity.
\par
The paper is organized as follows: in Section  \ref{s1} we recall some properties of the radial solution to  \eqref{eq1} in the annulus, in  Section  \ref{s2} we study the degeneracy of the radial solution, we prove Proposition \eqref{teo:annulus_nondegeneracy} and we study the set of solutions of the equation \eqref{i3} obtaining Proposition \ref{teo:annulus_degeneracy} from a careful study of the asymptotic of the radial solution as $p\to 1$. Finally in Section  \ref{s3} we prove Theorem  \eqref{teo1} and some qualitative properties of the solution.

\section{Preliminaries on radial solutions in the annulus}\label{s1}
Let  $A=\{x\in\R^{\n}\, : \, a<|x|<b\}$ be an annulus and $\n\ge 2$.
We focus here on radial solutions to the problem 
\begin{equation}\label{eq2}
\left\{\begin{array}{ll}
-\Delta v = |v|^{p-1}v \qquad & \text{ in } A , \\
v= 0 & \text{ on } \partial A,
\end{array} \right.
\end{equation}
which have precisely $m$ nodal zones.
Since $v$ and $-v$ solve  \eqref{eq2} we fix the sign of the solution assuming that $v'(a)>0$.

For $m=1$, we are actually looking at positive solutions to
\begin{equation}\label{eq1A}
\left\{\begin{array}{ll}
-\Delta u = u^p \qquad & \text{ in } A , \\
u>0 & \text{ in } A , \\
u= 0 & \text{ on } \partial A.
\end{array} \right.
\end{equation} 
Problem  \eqref{eq1A} has an unique radial solution (see, for instance, \cite{NN}),  which  we denote by $u_p$.
It is radially nondegenerate for all $p$, and  nondegenerate for all $p$ except an increasing sequence  $1<p_1<p_2<\cdots < p_k \nearrow +\infty$ (see \cite[Lemma 2.3 and Section 4]{GGPS} for details).

For $m\ge 2$,  existence of  a solution for \eqref{eq2} comes from a standard application of the Nehari method. 
For $a\le \alpha< \beta\le b$, we write $A(\alpha,\beta)$ for the annulus with radii $\alpha$ ad $\beta$ and $H(\alpha,\beta)$ for $H^{1}_{0,r}(A(\alpha,\beta))$, the space of radial functions belonging to $H^1_0(A(\alpha,\beta))$.
 On every $H(\alpha,\beta)$, we may define the energy functional
\[{\mathcal E}(v)= \frac{1}{2}\int_{A(\alpha,\beta)} |\nabla v|^2-\frac{1}{p+1}\int_{A(\alpha,\beta)} |v|^{p+1},\]
and the set
\begin{align*}
& {\mathcal N}(\alpha,\beta)=\left\{ v\in H(\alpha,\beta) \, : \, \int_{A(\alpha,\beta)} |\nabla v|^2 = \int_{A(\alpha,\beta)} |v|^{p+1} \right\} 
. &
\end{align*}
It is well known that the nontrivial positive radial solution of the problem \eqref{eq2} in the annulus $A(\alpha,\beta)$ is a critical value of $\mathcal E$, that can be seen as a Mountain Pass point  on $H(\alpha,\beta)$ or as a minimum point on ${\mathcal N}(\alpha,\beta)$.
A nodal radial solution with exactly $m$ nodal zones and zeros $a=r_0<r_1<r_2<..<r_m=b$ can be produced by solving the minimization problem
\begin{equation}\label{neharimin}
\Lambda(r_1,\cdots r_{m-1}) = \min\left\{  \sum\limits_{i=1}^m \inf\limits_{{\mathcal N}(r_{i-1},r_{i})} {\mathcal E} \, : \, a=r_0<r_1<\cdots<r_m=b\right\}.
\end{equation}

\begin{theorem}[Existence and uniqueness of the radial solution]\label{teo:annulus_existence_scsol}
Let $p>1$ and $m$ be a positive integer. Problem \eqref{eq2} admits exactly one radially symmetric nodal solution $v_p=v_p(r)$ with precisely $m$ nodal zones and $v_p'(a)>0$. Moreover such solution realizes the minimum of \eqref{neharimin}.
\end{theorem}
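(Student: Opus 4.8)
The plan is to derive existence together with the minimization property from the variational reduction already set up, and then to establish uniqueness by a separate monotonicity argument. The starting point is that any radial solution $v_p$ of \eqref{eq2} with $m$ nodal zones is completely determined by its interior zeros $a=r_0<r_1<\cdots<r_m=b$: on each nodal annulus $A(r_{i-1},r_i)$ the function $|v_p|$ solves \eqref{eq1A} on that sub-annulus, hence by the uniqueness of the positive radial solution (see \cite{NN}) it coincides with the corresponding ground state $w_{r_{i-1},r_i}$, while the signs alternate and are fixed by $v_p'(a)>0$. Thus a nodal solution is exactly a configuration $(r_1,\dots,r_{m-1})$ for which the consecutive pieces glue in a $C^1$ fashion, and the energy of the resulting function over $A$ equals $\Lambda(r_1,\dots,r_{m-1})$ as defined in \eqref{neharimin}.

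For existence I would minimize $\Lambda$ over the open simplex $\{a<r_1<\cdots<r_{m-1}<b\}$. Writing $c(\alpha,\beta)=\inf_{\mathcal N(\alpha,\beta)}\mathcal E=\mathcal E(w_{\alpha,\beta})>0$ for the radial ground-state level, the map $(\alpha,\beta)\mapsto c(\alpha,\beta)$ depends continuously on the annulus and blows up as $\beta-\alpha\to 0$, since the best radial Sobolev--Poincar\'e constant on a thin annulus diverges. Hence $\Lambda\to+\infty$ whenever any nodal zone collapses, so the infimum is attained at an interior configuration where the Euler--Lagrange conditions $\partial_{r_i}\Lambda=0$ hold. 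Computing $\partial_\alpha c$ and $\partial_\beta c$ by a Hadamard-type domain-variation formula (equivalently a Pohozaev identity), using that $w_{\alpha,\beta}$ is a critical point so the interior variation drops out and only the boundary term survives, one gets
\[ \partial_\beta c(\alpha,\beta)=-\tfrac12\,\omega_{N-1}\,\beta^{N-1}\bigl(w'_{\alpha,\beta}(\beta)\bigr)^2, \qquad \partial_\alpha c(\alpha,\beta)=+\tfrac12\,\omega_{N-1}\,\alpha^{N-1}\bigl(w'_{\alpha,\beta}(\alpha)\bigr)^2. \]
Therefore $\partial_{r_i}\Lambda=0$ reduces exactly to the matching condition $|v_p'(r_i^-)|=|v_p'(r_i^+)|$, so the glued function is $C^1$ across each interior zero, hence a weak and, by elliptic regularity and bootstrap, a classical solution of \eqref{eq2}; since its energy equals $\Lambda(r_1,\dots,r_{m-1})$, it realizes the minimum in \eqref{neharimin}.

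The delicate point, and the main obstacle, is \emph{uniqueness}: that the matching system $|w'_{r_{i-1},r_i}(r_i)|=|w'_{r_i,r_{i+1}}(r_i)|$, $i=1,\dots,m-1$, admits a single solution. I would address it by an ODE shooting argument. Normalizing $v(a)=0$ and $v'(a)=\gamma>0$, the radial problem $v''+\frac{N-1}{r}v'+|v|^{p-1}v=0$ has a unique solution $v(\cdot;\gamma)$, and the admissible configurations correspond exactly to the values $\gamma$ for which the $m$-th zero $b_m(\gamma)$ of $v(\cdot;\gamma)$ in $(a,\infty)$ equals $b$. The crux is the strict monotonicity of $b_m$ in $\gamma$: the variation $\phi=\partial_\gamma v$ solves the linearized equation with potential $p|v|^{p-1}$, which strictly dominates the potential $|v|^{p-1}$ governing $v$, so a Sturm comparison forces $\phi$ to oscillate strictly faster than $v$, and tracking the sign of $\phi$ at the successive zeros $b_k(\gamma)$ yields $b_k'(\gamma)<0$ for every $k$. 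Strict monotonicity leaves at most one admissible $\gamma$, which together with the existence above gives uniqueness. I expect this Sturm bookkeeping — keeping precise track of the sign of $\phi$ at consecutive zeros across the sign changes of $v$ in the non-autonomous ($\tfrac{N-1}{r}$) setting — to be the technically demanding step, whereas the existence and the minimization characterization are comparatively routine.
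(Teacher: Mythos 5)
Your overall strategy coincides with the one the paper adopts, although the paper does not actually write out either half: for existence it defers to the Nehari-partition minimization of Bartsch--Willem \cite{BW1993} (also used in \cite{DW07}), which is exactly your minimization of $\Lambda$ over configurations $a<r_1<\cdots<r_{m-1}<b$ with the blow-up of the ground-state level on collapsing sub-annuli and the Hadamard/Pohozaev computation of $\partial_{r_i}\Lambda$ forcing the $C^1$ matching; for uniqueness it simply cites \cite[Theorem 3.1]{NN}, whose content is the shooting/monotonicity argument you sketch. Your existence part is sound (one small point to make explicit: differentiating $\Lambda$ in the $r_i$ uses that the positive ground state $w_{\alpha,\beta}$ depends smoothly on the radii, which in turn rests on its uniqueness and radial nondegeneracy from \cite{NN} --- precisely the facts recorded in the paper's Remark \ref{solposincoll}; alternatively one argues by one-sided variations as in \cite{BW1993}).

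The one place where your write-up is genuinely short of a proof is the claim $b_k'(\gamma)<0$. From $b_k'=-\phi(b_k)/v'(b_k)$ you need $\phi(b_k)$ and $v'(b_k)$ to share a sign, i.e.\ you need $\phi=\partial_\gamma v$ to change sign \emph{exactly once} in each nodal interval of $v$. Sturm comparison with the potentials $p|v|^{p-1}\ge|v|^{p-1}$ only gives \emph{at least} one zero of $\phi$ per nodal interval; if $\phi$ picked up two zeros in some interval the sign at $b_k$ would flip and the monotonicity would fail. Excluding extra zeros is not free: it is equivalent to the statement that the linearized operator on each sub-annulus $A(b_{k-1},b_k)$ has radial Morse index one and is radially nondegenerate (again the content of Remark \ref{solposincoll}, proved in \cite{NN} via an auxiliary function of the type $z=rv'+\tfrac{2}{p-1}v$, the same device the paper uses later to bound the oscillation of solutions of the linearized equation). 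Moreover for $k\ge2$ the function $\phi$ enters the $k$-th nodal interval with a \emph{nonzero} value, so the clean ``consecutive zeros'' form of Sturm's theorem does not apply directly and one must run a Pr\"ufer-angle or energy comparison instead. You correctly identify this bookkeeping as the technically demanding step, but as written it is asserted rather than carried out; since the paper itself delegates exactly this point to \cite[Theorem 3.1]{NN}, the honest conclusion is that your proposal reproduces the cited proofs in outline but does not close the uniqueness argument independently.
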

We do not report the details of the existence part of the proof, which are very next to \cite[Theorem 2.1]{BW1993}, and somehow easier (see also Remark 2.2.a in the same paper). We also mention \cite{DW07}, where the same method is applied. Concerning uniqueness, it has been established in \cite[Theorem 3.1]{NN}

\begin{remark}\label{solposincoll}
Let $v_p$ be the radial solution of \eqref{eq2} and $a=r_0<r_1<\cdots<r_m=b$ its zeros. Then  $u_i(x)= (-1)^{i-1} \,v_p(x) \, 1_{\{r_{i-1}\le|x|\le r_{i}\}} \in {\mathcal N}(r_{i-1},r_i)$ is the only positive radial solution to \eqref{eq1A} in the annulus $A(r_{i-1},r_{i})$, as $i=1,\cdots,m$. We recall for future convenience that every $u_i$ is radially nondegenerate and its radial Morse index is 1.
\end{remark}

 \section{The linearization at $v_p$.}\label{s2}
In this section we  investigate the  nondegeneracy of $v_p$, precisely we want to characterize the values of $p$  such that the linearized problem
\begin{equation}\label{eq:lin}
\left\{\begin{array}{ll}
-\Delta w = p |v_p|^{p-1} w & \qquad \text{in } A, \\
w=0 & \qquad \text{ on } \partial A
\end{array}\right.
\end{equation} 
has a nontrivial solution.
As standard, we decompose any solution $w$ along $Y_k$, the space of the eigenfunctions of the Laplace-Beltrami operator on the sphere ${\mathbb S}^{\n-1}$, and write
\[w(x)=\sum\limits_{k=0}^{\infty} \phi_k(r) \, Y_k(\theta) , \quad a<r<b , \quad \theta\in {\mathbb S}^{\n-1}.\]
The components $\phi_k$ then satisfy the differential equations
\begin{align}\label{eq:linautorad}
\left\{\begin{array}{ll}
-\phi_k''- \dfrac{\n-1}{r} \phi_k'= \left(p |v_p|^{p-1} -\dfrac{\lambda_k}{r^2} \right)\phi_k & \qquad a<r<b , \\
\phi_k(a)=\phi_k(b)=0, &\end{array}\right.
\end{align}
where $\lambda_k$ is the eigenvalue associated to $Y_k$, i.e. $\lambda_k=j(\n-2+j)$ for some $j\in\N$.
\\
We also address to the one-dimensional  problem
\begin{align}\label{eq:linauto}
\left\{\begin{array}{ll}
-\phi''- \dfrac{\n-1}{r} \phi'= \left(p |v_p|^{p-1} +\dfrac{\nu}{r^2} \right)\phi \qquad a<r<b , \\
\phi(a)=\phi(b)=0. \end{array}\right.
\end{align}
The Sturm-Liouville theory guarantees that all the eigenvalues of \eqref{eq:linauto} are simple and that are characterized as min-max:
\begin{equation}\label{numinmax}\nu_l(p)=  \inf\limits_{\dim(V)= l} \max\limits_{\phi\in V} \dfrac{\int_a^b r^{\n-1}\left(|\phi'|^2-p|v_p|^{p-1}\phi^2\right)dr}{\int_a^b r^{\n-3}\phi^2dr} ,\end{equation}
 where $V$ runs through subspaces of $H^{1}_{0,r}(A)$.

\begin{proof}[Proof of Proposition \ref{teo:annulus_nondegeneracy}]

Comparing \eqref{eq:linauto} and \eqref{eq:linautorad}, it is clear that $v_p$  is radially degenerate only if $\nu_l(p)=-\lambda_0=0$ for some $l$, and degenerate if there exist $l$ and $k$ such that $\nu_l(p)=-\lambda_k$.
\\
By the min-max characterization \eqref{numinmax}, it is immediately seen that \eqref{eq:linauto} has at least $m$ negative eigenvalues, because the functions $u_i$ introduced in Remark \ref{solposincoll} have disjoint supports and they all satisfy
\begin{align*}
\int_a^b r^{\n-1}\left(|u_i'|^2-p|v_p|^{p-1}u_i^2\right)dr = &  \int_{r_{i-1}}^{r_{i}} r^{\n-1}\left(|u_i'|^2-p|u_i|^{p+1}\right)dr \\
= & -(p-1)\int_{r_{i-1}}^{r_{i}}  r^{\n-1}|u_i'|^2dr <0 .
\end{align*}
Next claim concerns the $(m+1)^{th}$ eigenvalue.\par
{\em Claim: the $(m+1)^{th}$ eigenvalue of \eqref{eq:linauto} is positive.}\par
To show this we look at  the auxiliary function $z=r v_p^{\prime}+\frac{2}{p-1}v_p$, which satisfies the equation in \eqref{eq:linauto} with $\nu=0$, but not the boundary condition. Let us prove that $z$ has exactly $m$ zeroes. Actually, as $v_p$ is the positive radial solution of \eqref{eq1}  in the annulus $A(r_{i-1},r_i)$ ($i=1,\dots m$), it follows that  $z(r_{i-1})=r_{i-1}v_p^{\prime}(r_{i-1})$ and $z(r_{i})=r_iv_p^{\prime}(r_i)$ are nonzero (otherwise $v_p$ and $v_p^{\prime}$ should vanish at the same point, implying $v_p \equiv 0$) and have opposite sign. Hence $z$ has at least one zero in any sub-interval,  and $z'\neq 0$ at any point where $z=0$ (otherwise also $z\equiv 0$).\par
Finally $z$ has not more than two nodal zones in any sub-interval $(r_{i-1},r_i)$  because otherwise it should be a sign-changing eigenfunction on a subdomain, contradicting the fact that $v_p$ has radial Morse index one in the annulus $A(r_{i-1},r_i)$.
\\
On the other hand, the $(m+1)^{th}$ eigenfunction of \eqref{eq:linauto} has $m+2$ zeroes in $[a,b]$, by the classical Sturm Liouville Theorem.
If the $(m+1)^{th}$ eigenvalue $\nu_{m+1}(p)$ where nonpositive, we could apply the Sturm-Picone Comparison Theorem and obtain that $z$ has 
at least $m+1$ zeros and this gives a contradiction proving the claim.
\par
In particular, this shows that the Morse index of problem \eqref{eq:linauto} is $m$ and $\nu_l(p)\neq 0$ for every $l$.
Then $v_p$ is radially nondegenerate, and the equality $\nu_l(p)=-j(\n-2+j)$ can hold only for $l\le m$ and $j\ge 1$.
Actually if $j=1$, the equality $\nu_l(p)=-(\n-1)$ can hold only for $l=m$, because $\nu_1(p)<\dots<\nu_{m-1}<-(\n-1)$ for all $p$.
To see this fact, we introduce another auxiliary function $\zeta:=v'_p$: it solves
    \[ -\zeta''-\dfrac{\n-1}{r}\zeta'=\left(p|v_p|^{p-1} -\dfrac{\n-1}{r^2}\right) \zeta \]
    and it has at least $m$ zeros inside $(a,b)$.
    Comparing this equation with \eqref{eq:linauto} by means of Sturm-Picone Comparison principle yields that, if $-(\n-1)\le \nu_l(p)$, then the related eigenfunction should have at least $m-1$ internal zeros, i.e. $l\ge m$.
\end{proof}

  The characterization of Proposition \ref{teo:annulus_nondegeneracy} allows to compute the Morse index of radial solutions, even though in a not completely explicit way. Anyway it suffices to give an estimate from below. We prove here Proposition \ref{teo:morseindex}.
\begin{proof}[Proof of Proposition \ref{teo:morseindex}]
  As explained in \cite[Lemmas 2.1 and 2.2]{GGPS}, the Morse index of the radial solution $v_p$ is exactly the sum of the dimensions of the eigenspace of the spherical harmonics (related to $j(N-2+j)$) such that $\nu_l(p)+j(N-2+j)<0$ for some $j\geq 1$ and for some $l=1,\dots,m$, i.e.
  \begin{equation}\label{morseformula}
    m(v_p)=\sum_{l=1}^m\sum _{j<J_l(p)}\frac{(N+2j-2)(N+j-3)!}{(N-2)! j!},
    \end{equation}
where $J_l(p)=\left(\sqrt{(\n-2)^2-4\nu_l(p)}-\n+2\right)/2$.
On the other hand in the proof of Proposition \ref{teo:annulus_nondegeneracy} it has been showed that $\nu_l(p)<-(\n-1)$ for $l=1,\dots m-1$ and $\nu_m(p)<0$. Hence $J_l(p)>1$ for $l=1,\dots m-1$ and $J_m(p)>0$, so that
\[
   m(v_p)\ge (m-1)\sum _{j=0}^1\frac{(N+2j-2)(N+j-3)!}{(N-2)! j!}+ 1 = (m-1)(\n+1)+1.
\]
\end{proof}

Next step stands in  showing that the equality $\nu_l(p)=-j(\n-2+j)$ is satisfied for a discrete increasing sequence of values of $p_k$. We shall deduce this fact by examining the behavior of the eigenvalues  $\nu_l(p)$ when $p$ approaches the ends of the existence range (i.e. $p\to+\infty$ and $p\to 1$) and then taking advantage of a sort of ``local analiticity'' of the map $p\mapsto\nu_l(p)$.
The asymptotic behavior of $v_p$ as $p\to+\infty$ has been deeply investigated in \cite{PS}. To our purpose it suffices to check that all the negative eigenvalues diverge.

\begin{lemma}\label{cpinfty}
  As $p\to+\infty$, it holds that $\nu_l(p)\to-\infty$ for $l=1,\dots m$. 
\end{lemma}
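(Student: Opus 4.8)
The plan is to bound $\nu_m(p)$ from above by a negative quantity of order $p$, using the min-max formula \eqref{numinmax} together with the explicit test functions already furnished by the proof of Proposition \ref{teo:annulus_nondegeneracy}. Since the eigenvalues are ordered, $\nu_1(p)\le\cdots\le\nu_m(p)$, it suffices to show $\nu_m(p)\to-\infty$: the lower-index eigenvalues are even smaller and the conclusion for them follows automatically.

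Recall that the functions $u_i$ of Remark \ref{solposincoll} have pairwise disjoint supports and satisfy the identity
\[\int_a^b r^{\n-1}\left(|u_i'|^2-p|v_p|^{p-1}u_i^2\right)dr=-(p-1)\int_{r_{i-1}}^{r_i} r^{\n-1}|u_i'|^2\,dr,\]
established there. I would take $V=\mathrm{span}\{u_1,\dots,u_m\}$, which is $m$-dimensional, as a competitor in \eqref{numinmax}. Because the supports are disjoint, for $\phi=\sum_i c_i u_i$ the Rayleigh quotient splits into
\[\frac{\int_a^b r^{\n-1}\bigl(|\phi'|^2-p|v_p|^{p-1}\phi^2\bigr)\,dr}{\int_a^b r^{\n-3}\phi^2\,dr}=-\,(p-1)\,\frac{\sum_i c_i^2\,A_i}{\sum_i c_i^2\,B_i},\]
where $A_i:=\int_{r_{i-1}}^{r_i} r^{\n-1}|u_i'|^2\,dr$ and $B_i:=\int_{r_{i-1}}^{r_i} r^{\n-3}u_i^2\,dr$. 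The right-hand side is a weighted average of the negative numbers $-(p-1)A_i/B_i$, so its maximum over $V$ equals $-(p-1)\min_i(A_i/B_i)$. Feeding this into \eqref{numinmax} yields $\nu_m(p)\le -(p-1)\min_i(A_i/B_i)$.

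It then remains to bound $A_i/B_i$ from below uniformly in $p$. On the annulus the weights are comparable, since $a^{\n-1}\le r^{\n-1}$ and $r^{\n-3}\le \max(a^{\n-3},b^{\n-3})$, so $A_i/B_i$ is controlled from below by a fixed multiple of the first Dirichlet eigenvalue of $-d^2/dr^2$ on $(r_{i-1},r_i)$. Since the nodal intervals partition $(a,b)$ we have $r_i-r_{i-1}\le b-a$ for every $i$ and every $p$, whence the one-dimensional Poincaré inequality gives $A_i/B_i\ge c_0$ with $c_0>0$ depending only on $a,b,\n$. Therefore $\nu_m(p)\le -(p-1)\,c_0\to-\infty$, which proves the lemma.

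I do not expect a serious obstacle here: the only point requiring attention is the uniformity in $p$ of the Poincaré constant, and this is immediate because the nodal annuli have radial width at most $b-a$, so shrinking intervals only improve the bound. In particular the argument needs none of the fine asymptotics of \cite{PS}; those would become relevant only if one wished to determine the precise blow-up rate of $\nu_l(p)$ rather than mere divergence to $-\infty$.
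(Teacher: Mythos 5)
Your proposal is correct and follows essentially the same route as the paper: both use the min--max characterization \eqref{numinmax} with the $m$-dimensional space spanned by the disjointly supported functions $u_i$ of Remark \ref{solposincoll}, reduce the Rayleigh quotient to $-(p-1)$ times a ratio of weighted Dirichlet and $L^2$ norms, and then bound that ratio below by a $p$-independent constant (the paper via the first Dirichlet eigenvalue $\lambda_1$ of $A$ and the factor $a^2$, you via a one-dimensional Poincar\'e inequality with the radial weights estimated by hand -- an equivalent step). No gap; your explicit optimization of the weighted average to $-(p-1)\min_i(A_i/B_i)$ is a harmless refinement of the paper's cruder bound.
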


\begin{proof} 
By the min-max characterization of eigenvalues \eqref{numinmax}, we have that
\begin{align*}
\nu_1<\dots<\nu_{m}\le \max\left\{ \dfrac{\int_a^b r^{\n-1}\left(|\phi'|^2-p|v_p|^{p-1}\phi^2\right)dr}{\int_a^b r^{\n-3}\phi^2dr} \, : \, \phi = \sum\limits_{i=1}^{m}c_i u_i\right\}
\end{align*}
where $u_i\in {\mathcal N}(r_{i-1},r_i)$ are the positive solutions introduced in Remark \ref{solposincoll}. Now 
\begin{align*}
 \dfrac{\int_a^b r^{\n-1}\left(|\phi'|^2-p|v_p|^{p-1}\phi^2\right)dr}{\int_a^b r^{\n-3}\phi^2dr} 
= \dfrac{\sum\limits_{i=1}^{m}c_i^2 \int_{r_{i-1}}^{r_i} r^{\n-1}\left(|u_i'|^2-p u_i^{p+1}\right)dr}{\sum\limits_{i=1}^{m}c_i^2 \int_{r_{i-1}}^{r_i} r^{\n-3}u_i^{2}dr} \\
=(1-p)\dfrac{\sum\limits_{i=1}^{m}c_i^2 \int_{r_{i-1}}^{r_i} r^{\n-1}|u_i'|^2dr}{\sum\limits_{i=1}^{m}c_i^2 \int_{r_{i-1}}^{r_i} r^{\n-3}u_i^{2}dr} \le (1-p) {a^2} \lambda_1
\end{align*}
where $\lambda_1$ denotes the first eigenvalue of the Laplacian with zero Dirichlet boundary conditions on $A$. So the claim follows.
\end{proof}
Next we analyze the behavior of $v_p$ for $p$ close to $1$. The following result is in the spirit of \cite{Gro09}, where a detailed asymptotic picture is obtained in a more general framework, after assuming a-priori that $\|v_p\|_{2}^{p-1}$is bounded. Here we are able to prove that actually $\|v_p\|_{\infty}^{p-1}$ stays bounded, and then deduce that a suitable rescaling of $v_p$ converges to an eigenfunction of the Laplacian.
\begin{proposition}\label{p1}
Let $\l_{m}$ be the $m^{th}$ radial eigenvalue for the Laplacian in $A$ and $\psi_{m}$ be the corresponding radial eigenfunction. Then 
 \begin{equation}\label{a0}
    \| v_p\|_{\infty}^{p-1} \to \lambda_{m}\quad\text{ as } \, p\to 1 ,
    \end{equation}
    and
    \begin{equation}\label{a0bis}
    \frac{v_p} {\| v_p\|_{\infty}}\to \psi_{m}\quad \text{ in } C^2(A) , \quad \text{ as } \, p\to 1 .
\end{equation}
 \end{proposition}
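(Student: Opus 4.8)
The plan is to study the rescaled function $\tilde v_p := v_p/\|v_p\|_\infty$, which satisfies
\begin{equation*}
-\Delta \tilde v_p = \|v_p\|_\infty^{p-1}\,|\tilde v_p|^{p-1}\tilde v_p \quad\text{in } A, \qquad \tilde v_p=0 \text{ on } \partial A,
\end{equation*}
with $\|\tilde v_p\|_\infty=1$. The heart of the matter is to show that the coefficient $\mu_p:=\|v_p\|_\infty^{p-1}$ stays bounded and bounded away from $0$ as $p\to 1$; once this is in hand, elliptic regularity gives compactness of $\tilde v_p$ in $C^2(A)$, and any limit must solve a linear eigenvalue problem $-\Delta \tilde v=\mu\, \tilde v$ in $A$ with $\|\tilde v\|_\infty=1$, where $\mu=\lim \mu_p$. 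Since $\tilde v_p$ is radial with $m$ nodal zones and $\tilde v_p'(a)>0$, the limit $\tilde v$ is a radial eigenfunction of $-\Delta$ with exactly $m$ nodal zones, which forces $\mu=\lambda_m$ (the $m^{th}$ radial eigenvalue) and $\tilde v=\psi_m$ up to sign; the sign is fixed by $\tilde v_p'(a)>0$. This simultaneously yields both \eqref{a0} and \eqref{a0bis}.

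First I would establish the upper bound $\mu_p\le C$. The natural tool is the variational characterization: the first (radial) eigenvalue $\lambda_1(A(r_{i-1},r_i))$ on each nodal annulus controls things from above. Indeed, restricting the equation to a single nodal zone $A(r_{i-1},r_i)$, where $v_p$ is a positive solution, testing against $v_p$ itself and using the Poincar\'e/eigenvalue inequality on that annulus gives a relation of the form $\mu_p \le \lambda_1(A(r_{i-1},r_i))$ in the limit, hence $\mu_p$ is bounded above uniformly in $p$. Equivalently, one tests the Nehari/energy identity on the nodal zone containing the maximum point to bound $\mu_p$ from above by a geometric eigenvalue quantity independent of $p$. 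For the lower bound $\mu_p\ge c>0$: if $\mu_p\to 0$ along a subsequence, then $-\Delta\tilde v_p\to 0$ uniformly, so the $C^2$ limit $\tilde v$ would be harmonic in $A$ with zero boundary values, forcing $\tilde v\equiv 0$, contradicting $\|\tilde v\|_\infty=1$. Thus $\mu_p$ is bounded and bounded away from zero.

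Next I would extract a subsequence $p\to 1$ along which $\mu_p\to\mu\in(0,\infty)$ and $\tilde v_p\to\tilde v$ in $C^2(A)$ (using that $|\tilde v_p|^{p-1}\to 1$ uniformly since $\|\tilde v_p\|_\infty=1$ and $p\to1$, so the right-hand side converges in $C^0$, and Schauder estimates upgrade $H^1$ convergence to $C^2$). The limit solves $-\Delta\tilde v=\mu\tilde v$ with $\|\tilde v\|_\infty=1$. The key structural input, to be preserved in the limit, is that $\tilde v_p$ has exactly $m$ nodal zones with zeros $a=r_0<r_1<\dots<r_m=b$; one must verify the nodal radii $r_i=r_i(p)$ do not collapse or merge as $p\to 1$, and that the $C^2$ convergence does not create or destroy zeros. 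Because the limit $\tilde v$ is a nonzero radial eigenfunction of $-\Delta$ with exactly $m$ nodal zones and $\tilde v'(a)\ge 0$, Sturm--Liouville theory identifies it uniquely as $\psi_m$, with $\mu=\lambda_m$. Since the limit is independent of the subsequence, the full limits \eqref{a0} and \eqref{a0bis} follow.

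The main obstacle I expect is controlling the location of the nodal radii and ruling out degeneration of the nodal structure in the limit, together with a clean proof that $\mu_p$ does not blow up. The blow-up exclusion is the delicate part: a priori, as $p\to1$ the solution could concentrate and $\|v_p\|_\infty$ could be large, so the bound $\mu_p=\|v_p\|_\infty^{p-1}\le C$ really requires the variational/eigenvalue comparison on nodal zones rather than a crude maximum-principle argument. This is precisely where the uniform upper bound on each $\lambda_1(A(r_{i-1},r_i))$ — guaranteed by the fact that the nodal annuli stay non-degenerate — and the Nehari structure from \eqref{neharimin} enter decisively, improving on the weaker hypothesis $\|v_p\|_2^{p-1}$ bounded assumed in \cite{Gro09}.
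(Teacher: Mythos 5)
Your overall skeleton matches the paper's: normalize $\tilde v_p = v_p/\|v_p\|_\infty$, show $\mu_p=\|v_p\|_\infty^{p-1}$ is bounded above and away from zero, pass to the limit in the equation, and identify the limit through the nodal count. The lower bound and the limiting procedure are fine, though note that $|\tilde v_p|^{p-1}\to 1$ is \emph{not} uniform near the zero set of $\tilde v_p$; what one actually needs, and what the paper proves via the elementary estimate $\bigl|(|s|^{p-1}-1)s\bigr|\le C(p-1)|s|^{1/2}$, is that the product $(|\tilde v_p|^{p-1}-1)\tilde v_p\to 0$ uniformly.

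The genuine gap is the upper bound $\mu_p\le C$. Your proposed derivation --- test the equation on a nodal annulus against $v_p$ and use the Poincar\'e/eigenvalue inequality --- produces the inequality in the \emph{wrong direction}: from $\int|\nabla v_p|^2=\int v_p^{p+1}\le\|v_p\|_\infty^{p-1}\int v_p^2$ together with $\int|\nabla v_p|^2\ge\lambda_1(A(r_{i-1},r_i))\int v_p^2$ one gets $\mu_p\ge\lambda_1(A(r_{i-1},r_i))$, a lower bound. To convert such eigenvalue comparisons into an upper bound one would need to know that $\tilde v_p^{p-1}$ stays bounded below on a region of fixed size, i.e.\ that the solution does not concentrate at its maximum point --- which is exactly the statement to be proved, so the argument is circular. (Nor does the Nehari structure help directly: the $H^1$ norm of $v_p$ on a nodal zone is \emph{not} bounded as $p\to1$ in general, since $\|v_p\|_\infty\sim\lambda_m^{1/(p-1)}$ typically blows up.) The paper's actual argument is a blow-up analysis: assuming $t_n=\|v_{p_n}\|_\infty^{(p_n-1)/2}\to\infty$, it first shows that $t_n(b-q_n)$ and $t_n(q_n-a)$ stay away from $0$ (via Poincar\'e on rescaled nodal zones, the Gidas--Ni--Nirenberg monotonicity and a Kelvin transform), then rescales around the maximum point $q_n$ and obtains in the limit $-u''=u$ on an unbounded interval with $|u|\le u(0)=1$ and $u'(0)=0$; the limit $\cos r$ has infinitely many zeros, contradicting the fixed number $m$ of nodal zones. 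Nothing in your outline replaces this step. The non-collapse of the nodal zones in the final identification, which you flag but do not prove, is also settled in the paper by the same rescaled estimates together with Hopf's lemma.
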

\begin{proof}
  We first show that $\|v_p\|_{\infty}^{p-1}$ is bounded near $p=1$. 
  We assume by contradiction that there exists a sequence $p_n\to 1$ such that
\[ t_n= \|v_{p_n}\|_{\infty}^{\frac{p_n-1}{2}} \to \infty \quad \text{ as } \, n\to +\infty,  \]
and take $q_n\in(a,b)$ a maximum point for $|v_{p_n}(r)|$.
Up to an extracted sequence, $q_n$ converges to some $q_0\in[a,b]$.
Let us show that
  \begin{equation}\label{a1}
   t_n (b-q_n)\not\longrightarrow 0\quad\hbox{as }n\rightarrow+\infty.
  \end{equation}
To see this let us denote by  $r_{n}$ the last internal zero of $v_{p_n}$, and notice that $b-q_n > (b-r_n)/2$: this is obvious if the maximum point $q_n$ does not belong to the last nodal region, otherwise it follows by the Gidas, Ni, Nirenberg monotonicity property \cite[Theorem 2]{GNN}. So, in order to prove \eqref{a1} it suffices to check that $t_n(b-r_n)\not\longrightarrow0$.
\par
We thus look at the rescaled function $\tilde v_n(r)= \frac{1}{\|v_{p_n}\|} v_{p_n}(r_n+\frac{r-1}{t_n})$, which satisfies
\[\left\{\begin{array}{ll}
-\tilde v_n^{\prime\prime}=\dfrac{N-1}{t_nr_n\!+\!r\!-\!1} \tilde v_n^{\prime}  + \tilde v_n^{p_n} , & r\in I_n=(1,1+t_n(b-r_n)) , \\
0< \tilde v_n(r) \le  1 ,  & r\in I_n\\
\tilde v_n(1)=0=\tilde v_n(1+t_n(b-r_n)). & 
\end{array}\right.\]
Multiplying the equation by $\tilde v_{n}$ and integrating by parts gives
  \begin{align*}
    \int_{I_n}|\tilde v'_{n}|^2 dr = &\int_{I_n} \left(\dfrac{N-1}{t_nr_n\!+\!r\!-\!1} \tilde v_n \tilde v_n^{\prime} +  \tilde v_{n}^{p_n+1}\right) dr \\
    \le & \dfrac{N-1}{t_na } \left(\int_{I_n} |\tilde v_n^{\prime}|^2dr\right)^{\frac{1}{2}} \left(\int_{I_n} \tilde v_n^2dr\right)^{\frac{1}{2}} + \int_{I_n}\tilde v_{n}^{2} dr \\ \le &\left( \dfrac{N-1}{a} (b-r_n) + (t_n(b-r_n))^2\right) \int_{I_n} |\tilde v_n^{\prime}|^2dr
  \end{align*}
  by Poincar\'e inequality, which implies that \eqref{a1} holds.
  \\
 A similar argument will be used to show that 
   \begin{equation}\label{a2}
   t_n (q_n-a)\not\longrightarrow0\quad\hbox{as }n\rightarrow+\infty.
  \end{equation}
 If by contradiction \eqref{a2} does not hold, we must have that $q_n\to a$; then denoting by  $s_n$ the first internal zero of $v_{p_n}$ and reasoning as before yields that  $t_n (s_n-a)$ is bounded away from zero. If $q_n$ is not contained in the first nodal region, then $t_n(q_n-a)$ does not vanish. Otherwise, the same monotonicity argument applied to the Kelvin transform of $v_{p_n}$ yields that  
    \[q_n-a > a \left(\left(\frac{1+(s_n/a)^{2-N}}{2}\right)^{\frac{1}{2-N}} -1\right) .\]
Since we are assuming that $q_n\to a$, it follows that also $s_n\to a$. So, for large values of $n$, the right-hand side behaves like $(s_n-a)/2$ and therefore we conclude that \eqref{a2} holds.
   \par
  Next we introduce the auxiliary function
\[u_n(r)= \dfrac{1}{\|v_{p_n}\|_{\infty}} v_{p_n}\left(q_n+\dfrac{r}{t_n}\right) ,  \]
that satisfies 
\[\left\{\begin{array}{ll}
-u_n'' - \dfrac{N-1}{t_nq_n+r} u_n' = |u_n|^{p_n-1} u_n , & \mbox{ in } (\alpha_n,\beta_n) , \\
|u_n(r)|\le |u_n(0)|= 1 , \, u'_n(0)=0 , & \\
u_n(\alpha_n)=0 = u_n(\beta_n)).
\end{array}\right.\]
Here $\alpha_n=t_n(a-q_n)$ and $\beta_n=t_n(b-q_n)$.
By \eqref{a1} and \eqref{a2}, as $n$ goes to infinity, the set $(\alpha_n,\beta_n)$  goes to an unbounded interval $I$ containing $0$. To fix notations, we take $I=(\alpha_o, +\infty)$ with $\alpha_o<0$. Besides
\begin{align*}
  |u'_n(r)|=& \left|\int_0^r u''_n d\rho\right| = \left|\int_0^r\left(\frac{N-1}{t_nq_n+\rho} u_n' + |u_n|^{p_n-1} u_n\right) d\rho\right| \\
  \le & \int_0^r\frac{N-1}{t_nq_n+\rho} |u_n'| d\rho + r
  \intertext{if $r>0$, or}
   |u'_n(r)| \le & \int_r^0\frac{N-1}{t_nq_n+\rho} |u_n'| d\rho - r
\end{align*}
if $r<0$. So by Gronwall's Lemma we deduce that $|u_n'(r)| \le r \left(\frac{t_nq_n+r}{t_nq_n}\right)^{N-1}$ if $r>0$, or $|u_n'(r)| \le |r| \left(\frac{t_nq_n}{t_nq_n+r}\right)^{N-1}$ if $r<0$. In any case $|u_n'(r)|\le c |r|$ then $u_n$ converges (locally uniformly) to a function $u$ that satisfies
\[\left\{\begin{array}{ll}
-u''  =  u , & \mbox{ in } (\alpha_o, +\infty) , \\
|u(r)|\le |u(0)|= 1 , \, u'(0)=0 . & 
\end{array}\right.\]
This is not possible because $u(r)=\cos r$, which has an infinite number of nodal zones.
Eventually we have proved that $\|v_p\|^{p-1}_{\infty}$ is bounded. \par
Now we are in position to show that \eqref{a0} and \eqref{a0bis} hold.

Let $p_n$ be a sequence such that $p_n\rightarrow 1$ as $n\to +\infty$ and let $v_n:=v_{p_n}$. The function $\bar{v}_n=\frac {v_n}{\| v_n\|_{\infty}}$ satisfies
\begin{equation}\nonumber
\begin{cases}
-\Delta \bar{v}_n=\| v_n\|_{\infty}^{p_n-1}|\bar{v}_n|^{p_n-1}\bar{v}_n & \hbox{ in }A\\
\bar{v}_n=0 & \hbox{ on }\partial A.
\end{cases}
\end{equation}
This implies that $\| v_n\|_{\infty}^{p_n-1}$ can not go to zero otherwise
 $\bar{v}_n$ would converge uniformly to zero and this is a contradiction with $\| \bar {v}_n\|_{\infty}=1$. \\
Therefore, up to a subsequence, $\| v_n\|_{\infty}^{p_n-1}\rightarrow\l$ and $\bar{v}_n$ converges uniformly in $A$ to a function $\bar v$.
Let us show that
\begin{equation}
\left(|\bar{v}_n|^{p_n-1}-1\right)\bar{v}_n \to 0 .
\end{equation}
For any fixed $n$, we have $\left(|\bar{v}_n|^{p_n-1}-1\right)\bar{v}_n =0$ if $\bar{v}_n= 0$, otherwise
  \begin{align*}
    \left|\left(|\bar{v}_n|^{p_n-1} -1\right)\bar{v}_n\right|\le& (p_n-1)\left|\log|\bar{v}_n|\int_0^1|\bar{v}_n|^{1+t(p_n-1)} dt \right| \\  \le& c (p_n-1) |\bar{v}_n|^{1/2} \le   c (p_n-1) .
  \end{align*}
So obviously $\| v_n\|_{\infty}^{p_n-1}|\bar{v}_n|^{p_n-1}\bar{v}_n \to \lambda\bar v$ and $\bar v$  solves
\begin{equation}\nonumber
\begin{cases}
-\Delta \bar{v}=\l \bar{v} & \hbox{ in }A\\
\bar{v}=0 & \hbox{ on }\partial A.
\end{cases}
\end{equation}
Finally the limit eigenfunction $\bar v$ is radial and  has exactly $m$ nodal zones.
 Actually by \eqref{a1} it follows that the last internal zero $r_n$ satisfies  $b-r_n> c \|v_{n}\|_{\infty}^{-\frac{p_n-1}{2}}$ and therefore the last nodal zone can not collapse to a null set. Similarly, this can not happen  for all the nodal zones.
 Inside each zone, $\bar v_n$ is strictly positive (or negative) and converges uniformly to $\bar v$. Hence $\bar v$ cannot change sign and Hopf Lemma guarantees that no further zero can appear.
\end{proof}
Next we deduce some information about the asymptotic of the eigenvalues $\nu_l(p)$ as $p\to 1$.
\begin{lemma}\label{cp1} 
For $p$ near to 1 we have that
\begin{align}\label{nu11}
\nu_{l}(p)\hbox{ are bounded from below for any }l\ge1,
\\ \label{nu12}
\lim\limits_{p\to 1^+}\nu_{m}(p) = 0.
\end{align} 
  \end{lemma}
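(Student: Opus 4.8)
The plan is to derive both statements from the behaviour of the potential $p|v_p|^{p-1}$ as $p\to1$, which is controlled by Proposition \ref{p1}. Writing $w_p=|v_p|/\|v_p\|_\infty$, so that $0\le w_p\le 1$, one has $p|v_p|^{p-1}=p\,\|v_p\|_\infty^{p-1}\,w_p^{\,p-1}$. By Proposition \ref{p1} the factor $\|v_p\|_\infty^{p-1}\to\lambda_m$, while $w_p\to|\psi_m|$ uniformly; since $\psi_m$ has only finitely many zeros, $w_p$ stays bounded away from $0$ at every point where $\psi_m\ne0$, whence $w_p^{\,p-1}\to1$ pointwise a.e. As $0\le w_p^{\,p-1}\le1$, I obtain that $p|v_p|^{p-1}$ is uniformly bounded for $p$ near $1$, say by a constant $C$, and that $p|v_p|^{p-1}\to\lambda_m$ almost everywhere and boundedly.

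This uniform bound immediately yields \eqref{nu11}. Indeed, using $r^{\n-1}=r^2 r^{\n-3}\le b^2 r^{\n-3}$ on $(a,b)$, the numerator in \eqref{numinmax} satisfies $\int_a^b r^{\n-1}(|\phi'|^2-p|v_p|^{p-1}\phi^2)\,dr\ge -C\int_a^b r^{\n-1}\phi^2\,dr\ge -Cb^2\int_a^b r^{\n-3}\phi^2\,dr$, so every Rayleigh quotient is bounded below by $-Cb^2$. Hence $\nu_1(p)\ge -Cb^2$, and since $\nu_1(p)\le\nu_2(p)\le\cdots$ this lower bound holds for all $l$.

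For \eqref{nu12} I first identify the limiting eigenvalue problem. Multiplying \eqref{eq:linauto} by $r^{\n-1}$ puts it in regular Sturm--Liouville form $-(r^{\n-1}\phi')'-p|v_p|^{p-1}r^{\n-1}\phi=\nu\,r^{\n-3}\phi$ on $[a,b]$, with smooth coefficients and weight $r^{\n-3}$ bounded away from $0$ because $a>0$. Replacing the potential by its a.e.\ limit gives the limit problem $-(r^{\n-1}\phi')'-\lambda_m r^{\n-1}\phi=\nu\,r^{\n-3}\phi$. The radial eigenfunction $\psi_m$ from Proposition \ref{p1} satisfies $-\psi_m''-\frac{\n-1}{r}\psi_m'-\lambda_m\psi_m=0$, so it is an eigenfunction of the limit problem for $\nu=0$; since $\psi_m$, the $m^{th}$ radial eigenfunction of the Laplacian, has exactly $m-1$ internal zeros, the Sturm--Liouville oscillation theorem identifies $\nu=0$ as the $m^{th}$ eigenvalue $\nu_m^0$ of the limit problem. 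As $\nu_m(p)<0$ for every $p$ by the proof of Proposition \ref{teo:annulus_nondegeneracy}, it only remains to show $\liminf_{p\to1}\nu_m(p)\ge 0$.

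I would prove this lower bound by contradiction: if $\nu_m(p_n)\to L<0$ along some $p_n\to1$, take a basis $\phi_1^n,\dots,\phi_m^n$ of the first $m$ eigenfunctions of \eqref{eq:linauto} at $p=p_n$, orthonormal for the weighted product $\int_a^b r^{\n-3}\phi\,\chi\,dr$, so that the quadratic form is at most $\nu_m(p_n)$ times the weighted norm on their span. The equation together with the uniform bound $C$ shows these are bounded in $H^1_{0,r}(A)$, so along a subsequence they converge weakly in $H^1_{0,r}(A)$ and strongly in the (equivalent, as $a>0$) weighted $L^2$; their limits remain orthonormal and span an $m$-dimensional space $V_*$. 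For $\phi=\lim\phi^n\in V_*$, weak lower semicontinuity of the Dirichlet term and the bounded a.e.\ convergence $p_n|v_{p_n}|^{p_n-1}\to\lambda_m$ (which lets me pass to the limit in the potential term by dominated convergence against the strongly $L^2$-convergent $\phi^n$) give $\int_a^b r^{\n-1}(|\phi'|^2-\lambda_m\phi^2)\,dr\le L\int_a^b r^{\n-3}\phi^2\,dr$, so the limit quadratic form is negative on $V_*$ and the min-max forces $\nu_m^0\le L<0$, contradicting $\nu_m^0=0$. The main obstacle is precisely this lower bound: the limiting potential $\lambda_m$ is only an almost-everywhere, dominated limit of $p|v_p|^{p-1}$ (uniform convergence fails near the zeros of $\psi_m$), so the limit passage must be arranged so that the loss of control there is harmless, and one must check that the weak limits of the orthonormalized eigenfunctions stay linearly independent, so that a genuinely $m$-dimensional test space survives in $V_*$.
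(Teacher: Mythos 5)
Your proof is correct, and while part \eqref{nu11} coincides with the paper's argument (the uniform bound $p|v_p|^{p-1}\le C$ from Proposition \ref{p1} plus $r^{\n-1}\le b^2r^{\n-3}$), your treatment of \eqref{nu12} is genuinely different from the paper's. The paper also reduces to showing $\liminf_{p\to1}\nu_m(p)\ge 0$, but does so by a Sturm--Picone rigidity argument rather than spectral convergence: assuming $\nu_{m}(p_n)\to\underline\nu<0$, the potential $p_n|v_{p_n}|^{p_n-1}+\nu_n/r^2$ in the equation for the $m^{th}$ eigenfunction $\phi_n$ becomes pointwise $\le$ the potential $|v_{p_n}|^{p_n-1}$ in the equation for $\bar v_n=v_{p_n}/\|v_{p_n}\|_\infty$ (since $(p_n-1)|v_{p_n}|^{p_n-1}\to0$ uniformly while $\nu_n/r^2\le\nu_n/b^2<0$); as $\phi_n$ and $\bar v_n$ have the same number of zeros, the comparison theorem forces $\phi_n=\pm\bar v_n$, hence the two potentials coincide and $\nu_n=(1-p_n)r^2|v_{p_n}|^{p_n-1}\to0$, a contradiction. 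That route is shorter and needs only the boundedness of $\|v_p\|_\infty^{p-1}$, not the identification of the limits $\lambda_m$ and $\psi_m$. Your route --- identifying the limit Sturm--Liouville problem, locating $0$ as its $m^{th}$ eigenvalue via the oscillation count of $\psi_m$, and passing to the limit in the min-max with an orthonormal family of eigenfunctions (weak $H^1$/strong weighted $L^2$ compactness, dominated convergence for the potential) --- uses the full strength of Proposition \ref{p1} and more functional-analytic machinery, but it is more robust: it actually yields $\nu_l(p)\to\nu_l^0$ for every fixed $l$, not just $l=m$, and the two worries you flag at the end (a.e.\ versus uniform convergence of the potential, and survival of an $m$-dimensional test space) are indeed handled by the dominated-convergence and strong-$L^2$-orthonormality steps you describe.
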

\begin{proof}
To check \eqref{nu11} it is enough to show that $\nu_{1}(p)$ is bounded from below as $p\to 1$. 
By definition
$$\nu_1(p)=  \inf\limits_{\phi\in H^{1}_{0,r}(A)} \dfrac{\int_a^b r^{\n-1}\left(|\phi'|^2-p|v_p|^{p-1}\phi^2\right)dr}{\int_a^b r^{\n-3}\phi^2dr} .$$
From Proposition \ref{p1} we have 
$$p|v_p|^{p-1}=p\|v_p\|_{\infty}^{p-1}|\bar{v}_p|^{p-1}\leq C$$
as $p\to 1$. Then, for any $\phi\in H^{1}_{0,r}(A)$ we have as $p\to 1$
\begin{align*}
  \int_a^b r^{\n-1}\left(|\phi'|^2-p|v_p|^{p-1}\phi^2\right)dr\geq \int_a^b r^{\n-1}\left(|\phi'|^2-C\phi^2\right)dr\\
  \geq
 -C\int_a^b r^{N-1}\phi^2dr \ge -c b^2\int_a^b r^{N-3}\phi^2dr , 
\end{align*}
so that
$$\dfrac{\int_a^b r^{\n-1}\left(|\phi'|^2-p|v_p|^{p-1}\phi^2\right)dr}{\int_a^b r^{\n-3}\phi^2dr}\geq -c b^2 
$$
which gives that $\nu_1(p)\geq  -cb^2 $.\par
Next, since we already know that $\nu_{m}(p)<0$ for all $p$,  it suffices to check that $\underline{\nu}=\liminf\limits_{p\to 1^+}\nu_{m}(p)=0$.
  To this end, let $p_n\to 1$ so that $\nu_n:=\nu_{m}(p_n)\to\underline\nu$ and  let $\phi_{n}$ be the $m$-eigenfunction for problem \eqref{eq:linauto} with $p=p_n$, normalized so that $\|\phi_n\|_{\infty}=1$.
  We compare the eigenfunction $\phi_n$ with $\bar v_n= v_{p_n}/\|v_{p_n}\|_{\infty}$, that satisfies
\begin{align*}
\left\{\begin{array}{ll}
-\bar v_n''- \dfrac{\n-1}{r} \bar v_n'= |v_{p_n}|^{{p_n}-1}\bar v_n & \qquad a<r<b , \\
\bar v_n(a)=\bar v_n(b)=0. &\end{array}\right.
\end{align*}
Assume by contradiction that $\underline\nu<0$. Remembering that $\|v_{p_n}\|_{\infty}^{p_n-1}$ is bounded by Proposition \ref{p1} we get
\begin{align*}
  p_n|v_{p_n}|^{{p_n}-1} +\dfrac{\nu_n}{r^2} - |v_{p_n}|^{{p_n}-1} \le (p_n-1) c +\dfrac{\nu_n}{b^2} \le 0 
\end{align*}
for $n$ large enough. On the other hand, $\phi_n$ and $\bar v_n$ have the same number of zeros, hence Sturm-Picone comparison theorem yields that $\phi_n=\pm\bar v_n$. In particular $\phi_n$ and $\bar v_n$ solve the same equation, that is
\[ \nu_n = (p_n-1) r^2 |v_{p_n}|^{{p_n}-1} .
\]
Passing to the limit as $n\to+\infty$, and using again the boundedness of $|v_{p_n}|^{p_n-1}$, we end up with the contradiction $\underline{\nu}=0$.
\end{proof}

\begin{lemma}\label{isolated} 
The map $p\mapsto \nu_l(p)$ is locally analytic and the set $\{\nu_l(p)=-j(\n-2+j)\}$ consists of only isolated points.
\end{lemma}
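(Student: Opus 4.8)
The plan is to split the statement into its two halves: first, that each $\nu_l(p)$ is a real-analytic function of $p$ on $(1,+\infty)$, and second, that the degeneracy set is discrete, which I would deduce from the first half by the identity theorem. For the discreteness, fix $l\in\{1,\dots,m\}$ and $j\ge 1$ and set $g(p):=\nu_l(p)+j(\n-2+j)$. Since the $l$-th eigenvalue is simple for every $p$ by Sturm--Liouville theory, the local analytic pieces glue into a single real-analytic function on the connected interval $(1,+\infty)$, so $g$ is real-analytic there. If the zero set of $g$ had an accumulation point, the identity theorem would force $g\equiv 0$ on all of $(1,+\infty)$, i.e.\ $\nu_l(p)\equiv -j(\n-2+j)$, contradicting $\nu_l(p)\to-\infty$ from Lemma \ref{cpinfty}. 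Hence the zeros of $g$ are isolated.

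The core is therefore the local analyticity of $p\mapsto\nu_l(p)$, which I would obtain from analytic perturbation theory for the self-adjoint family \eqref{eq:linauto} together with the simplicity of its spectrum (recorded via the min--max \eqref{numinmax}). Since a simple eigenvalue of an analytic family of operators depends analytically on the parameter, it suffices to show that the only $p$-dependent coefficient, the potential $p\,|v_p|^{p-1}$, is a real-analytic map of $p$ with values in, say, $L^\infty(a,b)$. Two ingredients enter. On the one hand, for a \emph{fixed} bounded profile the map $p\mapsto|v|^{p-1}v$ is real-analytic into $C^0$: away from the zeros this is clear, while near a zero the uniform bound $\sup_{|s|\le M}|s|^{p_0}\,\bigl|\log|s|\bigr|^{k}\le C\,e^{-k}(k/p_0)^{k}$ makes the Taylor series in $p$ converge, with radius at least $p_0$. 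On the other hand, one needs $p\mapsto v_p$ itself to be real-analytic into $C^2_r(\bar A)$, and this is the delicate point.

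To produce the analytic dependence of $v_p$ I would invoke the analytic implicit function theorem, the invertibility of the relevant linearisation being exactly the radial nondegeneracy of Proposition \ref{teo:annulus_nondegeneracy}. The genuine obstacle is that the Nemytskii operator $v\mapsto|v|^{p-1}v$ is \emph{not} analytic --- indeed not even $C^2$ --- as a map $C^2_r(\bar A)\to C^0$ when $1<p<2$, precisely because its second $v$-derivative $p(p-1)|v|^{p-2}\,\mathrm{sign}(v)$ blows up on the nodal set. I would circumvent this by exploiting that the zeros of $v_p$ are simple (Theorem \ref{teo:annulus_existence_scsol} and Remark \ref{solposincoll}) and decomposing $\bar A$ into the $m$ closed nodal annuli $A(r_{i-1},r_i)$. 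On each of them $v_p$ equals, up to sign, the unique positive solution, where the nonlinearity acts on the positive cone and depends analytically on $p$ and on the bounding radii; the interior radii $\rho=(r_1,\dots,r_{m-1})$ are then pinned down by the $C^1$-matching conditions $M(p,\rho)=0$, an analytic finite-dimensional system. A standard reduction identifies the invertibility of $D_\rho M$ with the absence of a nontrivial radial kernel of \eqref{eq:linautorad} at $k=0$, that is, with the radial nondegeneracy of Proposition \ref{teo:annulus_nondegeneracy}; the analytic implicit function theorem then yields $\rho=\rho(p)$, and hence $v_p$, real-analytic in $p$. Feeding this back through the previous paragraph gives the analyticity of the potential, and thus of $\nu_l(p)$, completing the proof.
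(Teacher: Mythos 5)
Your overall strategy --- analyticity of the eigenvalue branches plus the identity theorem, with Lemma \ref{cpinfty} ruling out the constant branch --- is in the same spirit as the paper's argument, which however occupies only two lines: it observes that by Lemmas \ref{cpinfty} \emph{and} \ref{cp1} any solution of \eqref{i3} or \eqref{im} lies in a compact subset of $(1,+\infty)$, and then refers to \cite[Lemma 3.3 part (c)]{DW07} for the local analyticity. So you are supplying the detail that the paper outsources; note that you never use Lemma \ref{cp1}, replacing the compactness step by global analyticity on the connected interval $(1,+\infty)$, which is legitimate for the statement as phrased since ``isolated points'' is all that is claimed.

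There are, however, two concrete gaps in your analyticity chain. First, restricting to a single nodal annulus does not by itself restore analyticity of the Nemytskii operator: the positive solution $u_i$ still vanishes on the two boundary spheres of $A(r_{i-1},r_i)$, so $u\mapsto u^p$ suffers the same loss of smoothness there (its second derivative $p(p-1)u^{p-2}$ blows up for $p<2$, and derivatives of order $k>p$ for any non-integer $p$). What saves the argument is that the admissible perturbations $h$ lie in $C^{1}_0$ or $C^{2}_0$ of the sub-annulus, hence vanish at least linearly at the boundary, while $u_i\gtrsim \dist(\cdot,\partial A(r_{i-1},r_i))$ by Hopf's lemma; thus $h/u_i$ is small in sup-norm and the series $u_i^p\sum_k\binom{p}{k}(h/u_i)^k$ converges. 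You must say this explicitly. Second, and more seriously, your closing step ``feeding this back through the previous paragraph'' does not work as written: your fixed-profile computation shows that $p\mapsto|v|^{p-1}v$ is analytic for \emph{frozen} $v$, whereas the potential in \eqref{eq:linauto} is $p|v_p|^{p-1}$ with $v_p$ moving, and its $k$-th $p$-derivative contains terms of size $|v_p|^{p-1-k}|\partial_p v_p|^{k}\sim|r-r_i(p)|^{p-1-k}$ near the moving zeros (where $\partial_p v_p(r_i)=-v_p'(r_i)\,r_i'(p)\neq0$ in general); these are unbounded for $k\ge1$ and not even integrable for $k\ge2$ when $p<2$, so the potential is \emph{not} analytic into $L^\infty(a,b)$, nor into $L^1(a,b)$. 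The cure is the device you already half-introduce: transplant each nodal zone to a fixed reference interval via the $p$-dependent radii $\rho(p)$, so that the zeros are frozen and $|v_p|^{p-1}=|r-r_i|^{p-1}|w_p|^{p-1}$ with $w_p$ bounded away from zero; the transformed Sturm--Liouville problem then has coefficients genuinely analytic in $p$, the (simple) eigenvalues depend analytically on $p$ by perturbation theory, and the rest of your argument goes through. With these two repairs your proof is complete and essentially reconstructs the argument of \cite[Lemma 3.3 part (c)]{DW07} that the paper invokes.
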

\begin{proof}
By Lemmas \ref{cpinfty} and \ref{cp1}, we have that, for any fixed integer $j\ge1$, if $p$ solves \eqref{i3} or \eqref{im} then $p$ belongs to a compact set in $(1,+\infty)$.  Then, arguing as in \cite[Lemma 3.3 part (c)]{DW07}, the claim follows.
\end{proof}
Eventually, putting  together the characterization of the degenerate $p$ obtained in Proposition \ref{teo:annulus_nondegeneracy} with the information collected in Lemmas \ref{cpinfty}, \ref{cp1} and \ref{isolated} we are able to conclude the proof of Proposition \ref{teo:annulus_degeneracy}.
\begin{proof}[Proof of Proposition \ref{teo:annulus_degeneracy}]
The values of $p$ such that $v_p$ is degenerate are given by the solution of the equations \eqref{i3} or \eqref{im}. 
By Lemmas  \ref{cpinfty} and \ref{cp1} the equation $\nu_m(p)=-j(\n-2+j)$ admits at least a solution for any $j\geq 1$.
So the values of $p$ such that $v_p$ is degenerate build up an infinite set, which consists of isolated points by Lemma \ref{isolated}.\\
In addition the Morse index of $v_p$ is given by the formula \eqref{morseformula}, and from Lemma  \ref{cpinfty} it is easy to see that $J_l(p)$ goes to infinity together with  $p$. Then $m(v_p)\to +\infty$ as $p\to +\infty$.
\end{proof}

\section{Existence of solutions in annular type domains}\label{s3}
Here we prove our perturbation theorem.
\begin{proof}[Proof of Theorem \ref{teo1}]
We start by introducing a change of variable which puts into relation problem \eqref{eq1}  in $\Omega_t$ with a problem in an annulus. For $A=\{x\in\R^{\n}\, : \, a<|x|<b\}$ let us consider a smooth function $\s:\bar A \to \R^{\n}$ and define
\begin{equation}
\Omega_t = \{ x+ t \s(x) \, : \, x\in A\}
\end{equation} 
Note that for $t$ small enough $\Omega_t$ is diffeomorphic to the annulus $A$.
Moreover there is another smooth function $\tilde\s$ such that $x=y+t\tilde\s(y)\in A$ for $y\in\Omega_t$ (at least for small values of $t$). An immediate computation shows that finding a solution $u(y)$ of \eqref{eq1} in $\Omega_t$ is equivalent to find a   solution $v$ of
\begin{equation}\label{eq2tA}
\left\{\begin{array}{ll}
-\Delta v -L_tv= |v|^{p-1}v \qquad & \text{ in } A , \\
v= 0 & \text{ on } \partial A,
\end{array} \right.
\end{equation} 
where $L_t$ is a linear operator
\[ L_t v =  t  \sum\limits_{i,k} \partial^2_{y_i y_i} \tilde\s_k \, \partial_{x_k}v + 2t \sum\limits_{i,k} \partial_{y_i} \tilde\s_k \, \partial^2_{x_i x_k}v + t^2 \sum\limits_{i,j,k} \partial_{y_j} \tilde\s_i \, \partial_{y_j} \tilde\s_k \,\partial^2_{x_i x_k}v .  
\]

Note that for $t=0$ problem \eqref{eq2tA} gives back problem \eqref{eq2} on the annulus (or \eqref{eq1A} for $m=1$).
Next we follow \cite{cowan14} and define a function $F:\R\times C^{2,\gamma}_0(\bar A) \to C^{0,\gamma}_0(\bar A)$ by
\[ F(t,v) = -\Delta v - L_t v - |v|^{p-1 }v.\]
It is easily seen that $F$ is a $C^1$ map verifying $F(0,v_p)=0$.
In order to apply the Implicit Function Theorem we examine $D_vF(0,v_p)$, the Fr\'echet derivative of $F$ with respect to $v\in C^{2,\gamma}_0(\bar A)$ computed at $(0,v_p)$. Its kernel is described by the solutions $w\in C^{2,\gamma}_0(\bar A)$ to the linearized problem 
\[ -\Delta w = p \, |v_p|^{p-1}\,  w. \] 
Hence the map $D_vF(0,v_p)$ has a bounded inverse for all $p$ such that the related radial solution $v_p$ is nondegenerate in $C^{2,\gamma}_0(\bar A)$.
For $m>1$, it follows by Lemmas \ref{teo:annulus_nondegeneracy} and  \ref{isolated}  that there is only an increasing sequence of isolated values of $p$ where $v_p$ is degenerate.
In the case $m=1$, the same property was already established in \cite{GGPS}.

So the  Implicit Function Theorem applies and there is a continuum  of functions $v_t\in C^{2,\gamma}_0(\bar A)$  such that $F(t, v_t)=0$ and the number of its nodal zones coincides with the ones of $v_0$, at least for $|t|$ small, because the map  $t\mapsto v_t$ is continuous on $C^{2,\gamma}_0(\bar A)$. Eventually  $u_t(y):=v_t(x)$ is a solution of \eqref{eq1} in $\Omega_t$ with exactly $m$  nodal zones.
\end{proof}
We end this section by proving some additional properties of the solution in the perturbed domain.
\begin{proposition}
Let us consider the solution $u_p$ of problem \eqref{eq1} in $\Omega_t$ given by Theorem \ref{teo1}. Then the Morse index of the solution $u_p$ satisfies
\begin{equation}
m(u_p)=m(v_p)
\end{equation}
where $v_p$ is the radial solution in the annulus. Finally,
\begin{equation}\label{a5}
\lim\limits_{p\rightarrow+\infty}m(u_p)=+\infty
\end{equation}
\end{proposition}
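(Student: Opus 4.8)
The plan is to transport the whole Morse-index computation back to the fixed annulus $A$ through the diffeomorphism used in the proof of Theorem \ref{teo1}, and then to invoke the stability of the Morse index at a nondegenerate critical point. Recall that $u_p=u_t$ with $u_t(y)=v_t(x)$, where $y=\Phi_t(x):=x+t\s(x)$ and $v_t\in C^{2,\gamma}_0(\bar A)$ is the branch furnished by the Implicit Function Theorem, with $v_0=v_p$. First I would pull back the second variation of the energy $E_{\Omega_t}$ at $u_t$: since $\phi\mapsto\phi\circ\Phi_t^{-1}$ is a linear isomorphism of $H^1_0(A)$ onto $H^1_0(\Omega_t)$, the Morse index of $u_t$ equals the number of negative eigenvalues of the quadratic form
\[ Q_t(\phi,\phi)=\int_A\left(\langle M_t\na\phi,\na\phi\rangle - p\,|v_t|^{p-1}\phi^2\,J_t\right)dx \]
on $H^1_0(A)$, where $M_t$ is the symmetric positive definite matrix produced by the change of variables (with $M_0=I$) and $J_t=\det D\Phi_t$ is its Jacobian (with $J_0=1$). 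This is just the symmetric representative of the linearized operator of \eqref{eq2tA} already met in the proof of Theorem \ref{teo1}.

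Next I would let $t\to 0$. As $D\Phi_t=I+t\,D\s\to I$, one has $M_t\to I$ and $J_t\to1$ uniformly; moreover $v_t\to v_p$ in $C^{2,\gamma}(\bar A)$, so $|v_t|^{p-1}\to|v_p|^{p-1}$ uniformly, the map $s\mapsto|s|^{p-1}$ being uniformly continuous for $p>1$. Hence $Q_t\to Q_0$, where $Q_0(\phi,\phi)=\int_A(|\na\phi|^2 - p\,|v_p|^{p-1}\phi^2)\,dx$ is exactly the second variation at the radial solution, whose number of negative eigenvalues is $m(v_p)$ by \eqref{morseformula}. For $p\ne p_k$ the solution $v_p$ is nondegenerate, by Proposition \ref{teo:annulus_nondegeneracy} together with Lemma \ref{isolated}, so $0$ lies in a spectral gap of $Q_0$. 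By the min-max characterization the generalized eigenvalues of $Q_t$ depend continuously on $t$, and none of them can cross $0$ for $|t|$ small; therefore the count of negative eigenvalues is locally constant and $m(u_p)=m(u_t)=m(v_p)$.

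Finally, \eqref{a5} is immediate: for each admissible exponent $p$ Theorem \ref{teo1} yields, for $t=t(p)$ small, a solution with $m(u_p)=m(v_p)$, while Proposition \ref{teo:annulus_degeneracy} gives $m(v_p)\to+\infty$ as $p\to+\infty$; since $m(u_p)$ is an integer equal to $m(v_p)$, it diverges as well, and the fact that the admissible $t$ depends on $p$ causes no trouble because only the equality for the chosen $t$ is used. I expect the main difficulty to be the second step: making rigorous the continuity of the generalized spectrum of $Q_t$ on the common space $H^1_0(A)$, i.e. quantifying the perturbations $M_t-I$, $J_t-1$ and $|v_t|^{p-1}-|v_p|^{p-1}$ strongly enough to freeze the finite number of negative eigenvalues. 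Once the spectral gap granted by nondegeneracy is in hand, the stability of the Morse index is standard.
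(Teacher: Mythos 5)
Your proposal is correct and follows essentially the same route as the paper: transport the Morse index to the fixed annulus via the diffeomorphism, use the nondegeneracy of $v_p$ (for $p\ne p_k$) to freeze the count of negative eigenvalues as $t\to 0$, and deduce \eqref{a5} from the divergence of $m(v_p)$. If anything, your version is more explicit than the paper's terse contradiction argument about exactly where nondegeneracy and the spectral gap enter.
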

\begin{proof}
Using the map $\s$ we get that the Morse index of $u_p$ in $\Omega_t$ is the same of the corresponding function $v_{t,p}$ in $A$. Let us show that, for $t$ small, we have that
\begin{equation}
m(v_{t,p})=m(v_p)
\end{equation}
By contradiction suppose that we have that there exists a sequence $t_n\rightarrow0$ such that $m(v_{t_n,p})\ne m(v_p)$. Then, since the Morse index is an integer, we deduce that $\lim\limits_{n\rightarrow+\infty}m(v_{t_n,p})\ne m(v_p)$. On the other hand, since $v_{t_n,p}\rightarrow v_p$ in $C^2(A)$ as $n\rightarrow+\infty$ we get a contradiction.\par
Finally \eqref{a5} follows by Lemma \ref{cpinfty}.
\end{proof}
Our last result provides some information on the shape of the solution in the perturbed annulus $\Omega_t$ at least as $p$ is close to $1$ and $+\infty$.
\begin{proposition}
Let $u_p$ be the solution in $\Omega_t$ given by Theorem \ref{teo1}. Then, for any $\epsilon>0$\par
$i)$ there exist $p_0=p_0(\epsilon)$ and $t_0=t_0(\epsilon)$ such that for any $1<p<p_0$ and $|t|<t_0$ we have
\begin{equation}\label{b6}
\left|\left|u_p-\psi_m(y+t\tilde\s(y))\right|\right|_{C^0(\Omega_t)}<\epsilon
\end{equation}
where $\psi_m$ is the function appearing in Proposition \ref{p1},\par
$ii)$ there exist $p_0=p_0(\epsilon)$ and $t_0=t_0(\epsilon)$ such that for any $p>p_0$ and $|t|<t_0$ we have
\begin{equation}\label{b7}
\left|\left|u_p-\omega(y+t\tilde\s(y))\right|\right|_{C^0(\Omega_t)}<\epsilon
\end{equation}
where $\omega(x)$ is the radial function which appears in Theorem 1.1 in \cite{PS}.
\end{proposition}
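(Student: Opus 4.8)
The plan is to push everything back to the fixed annulus $A$ through the diffeomorphism $y\mapsto x=y+t\tilde\s(y)$ of $\Omega_t$ onto $A$ already used in the proof of Theorem \ref{teo1}, and then to read the profile of $u_p$ off the radial solution $v_{t,p}$ of \eqref{eq2tA}. Since that map is a bijection and $u_p(y)=v_{t,p}(y+t\tilde\s(y))$, while the competitor is evaluated at the same point $\psi_m(y+t\tilde\s(y))=\psi_m(x)$, one has the exact identity $\|u_p-\psi_m(\cdot+t\tilde\s)\|_{C^0(\Omega_t)}=\|v_{t,p}-\psi_m\|_{C^0(A)}$, and likewise with $\omega$ in place of $\psi_m$. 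Thus \eqref{b6} and \eqref{b7} are equivalent to showing that $v_{t,p}$ is $C^0(A)$-close to $\psi_m$ as $(p,t)\to(1,0)$, respectively to $\omega$ as $(p,t)\to(+\infty,0)$.

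For a fixed exponent the estimate splits by the triangle inequality as $\|v_{t,p}-\psi_m\|_{C^0(A)}\le\|v_{t,p}-v_p\|_{C^0(A)}+\|v_p-\psi_m\|_{C^0(A)}$, where $v_p=v_{0,p}$ is the radial solution on $A$. The last term is governed by the asymptotics already established: Proposition \ref{p1} gives $v_p/\|v_p\|_\infty\to\psi_m$ in $C^2(A)$ as $p\to1$ (so in part $i)$ the radial limit is read, as in Proposition \ref{p1}, on the normalized solution), whereas \cite[Theorem 1.1]{PS} gives the profile $\omega$ as $p\to+\infty$; in either case one fixes $p_0$ so that this term is below $\epsilon/2$, and the admissible range ($1<p<p_0$ in part $i)$, $p>p_0$ in part $ii)$) is exactly the side on which the limit is taken. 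The term $\|v_{t,p}-v_p\|_{C^0(A)}$ is the perturbation in $t$: the branch $t\mapsto v_{t,p}$ furnished by the Implicit Function Theorem is continuous into $C^{2,\g}_0(\bar A)\hookrightarrow C^0(A)$ and reduces to $v_p$ at $t=0$, so for each fixed $p$ it can be made below $\epsilon/2$ by shrinking $|t|$.

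The main obstacle is to render this $t$-threshold uniform in $p$, as the statement demands ($t_0$ depending on $\epsilon$ only). The trouble is that the Implicit Function Theorem window is proportional to $\|[D_vF(0,v_p)]^{-1}\|^{-1}$, and this inverse blows up wherever the linearization degenerates: as $p\to1$ one has $\nu_m(p)\to0$ by Lemma \ref{cp1}, i.e. $D_vF(0,v_p)=-\Delta-p|v_p|^{p-1}$ tends to the singular operator $-\Delta-\lambda_m$ whose kernel contains $\psi_m$; and near each degeneracy point $p_k$ the inverse is again unbounded, while by Proposition \ref{teo:annulus_degeneracy} infinitely many of the $p_k$ lie in the range $p>p_0$ of part $ii)$. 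Hence iterating the two one-parameter limits cannot by itself produce a $p$-independent $t_0$.

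I would overcome this by discarding the fixed-$p$ branch and analyzing $v_{t,p}$ directly in the joint limit, re-running the asymptotic arguments of the previous sections with $L_t$ treated as a uniformly small perturbation: its coefficients are $O(t)$, so $-\Delta-L_t$ differs from $-\Delta$ by a term controlled uniformly in the Poincar\'e and elliptic estimates used there. For part $i)$ this amounts to repeating, for \eqref{eq2tA}, the blow-up and rescaling scheme of the proof of Proposition \ref{p1}: the boundedness of $\|v_{t,p}\|_\infty^{p-1}$, the non-collapse of the $m$ nodal zones, and the identification of the normalized limit as the eigenfunction $\psi_m$ all survive because $L_t\to0$ uniformly, giving $\|v_{t,p}\|_\infty^{p-1}\to\lambda_m$ and $v_{t,p}/\|v_{t,p}\|_\infty\to\psi_m$ jointly. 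Part $ii)$ follows the same philosophy with the concentration analysis of \cite{PS} replacing Proposition \ref{p1}; this is the genuinely delicate case, since one must check that the sharp $p\to+\infty$ estimates of \cite{PS} remain valid once the lower-order operator $L_t$ is switched on, and it is this robustness, rather than any single new idea, that carries the real weight of the proof.
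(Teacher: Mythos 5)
Your first two paragraphs are, in substance, the paper's entire proof: the authors dispose of this proposition in two lines, asserting that \eqref{b6} follows from Proposition \ref{p1} together with Theorem \ref{teo1}, and \eqref{b7} from Theorem \ref{teo1} together with the asymptotics of \cite{PS} --- that is, pull back to the annulus via $y\mapsto y+t\tilde\s(y)$, use the $p\to1$ (resp.\ $p\to+\infty$) limit of the radial solution $v_p$, and use the continuity in $t$ of the branch $v_{t,p}$ produced by the Implicit Function Theorem. On that level your proposal matches the paper exactly.

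Where you go beyond the paper is in the uniformity issue, and the point is genuine: as stated, $t_0$ must not depend on $p$ over a non-compact range of exponents, while the Implicit Function Theorem window (and the modulus of continuity of $t\mapsto v_{t,p}$) degenerates both as $p\to1^+$, where $\nu_m(p)\to0$ by Lemma \ref{cp1} so that the linearization tends to the singular operator $-\Delta-\lambda_m$, and near the infinitely many degeneracy points $p_k$ contained in $\{p>p_0\}$. The paper does not engage with this at all; your diagnosis is correct and your proposed remedy --- re-running the asymptotic analysis directly on \eqref{eq2tA} in the joint limit, with $L_t$ treated as a uniformly small perturbation --- is the natural one. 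You do not, however, carry it out: for part $ii)$ you explicitly concede that the robustness of the estimates of \cite{PS} under the perturbation $L_t$ ``carries the real weight of the proof'' without verifying it, so at this point your argument, like the paper's, is a plan rather than a proof. One further remark: as literally written \eqref{b6} cannot hold without normalizing $u_p$, since $\|v_p\|_\infty=\big(\|v_p\|_\infty^{p-1}\big)^{1/(p-1)}$ tends to $0$ or $+\infty$ as $p\to1$ unless $\lambda_m=1$, and only $v_p/\|v_p\|_\infty$ converges to $\psi_m$; you flag this implicitly, and it is an imprecision of the statement itself rather than of either proof.
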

\begin{proof}
We have that \eqref{b6} follows by Proposition \ref{p1} and Theorem \ref{teo1} and \eqref{b7} follows again by Theorem \ref{teo1} and  by the result in \cite{PS}.
\end{proof}

\end{document}